\theoremstyle{plain}
\newtheorem{theorem}{Theorem}[section]
\newtheorem{proposition}[theorem]{Proposition}
\newtheorem{lemma}[theorem]{Lemma}
\newtheorem{corollary}[theorem]{Corollary}
\newtheorem{definition}[theorem]{Definition}
\newtheorem{example}[theorem]{Example}
\theoremstyle{remark}
\newtheorem{remark}[theorem]{Remark}
\numberwithin{equation}{section}
\newcommand{\bC}{\mathbb{C}}
\newcommand{\coverc}{\lessdot _{c}} 
\newcommand{\lessc}{< _{c}} 
\newcommand{\Lc}{\mathcal{L}_{c}} 
\newcommand{\Qsym}{\ensuremath{\operatorname{QSym}}}
\newcommand{\qs}{{\mathcal{S}}}		
\newcommand{\Nsym}{\ensuremath{\operatorname{NSym}}}
\newcommand{\ncs}{{\mathbf{s}}}          	
\newcommand{\ncsa}{\mathbf{s}_{\alpha}}
\newcommand{\ncsb}{\mathbf{s}_{\beta}}
\newcommand{\set}{\mathrm{set}} 
\newcommand{\des}{\mathrm{Des}} 
\newcommand{\comp}{\mathrm{comp}} 
\newcommand{\rem}{\mathfrak{d}} 
\newcommand{\addt}{\mathfrak{t}} 
\newcommand{\addu}{\mathfrak{u}} 
\newcommand{\Id}{{Id}} 
\newcommand{\down}{\mathfrak{d}}
\newcommand{\downrow}{\mathfrak{d}}
\newcommand{\alphap}{\alpha^+}
\newcommand{\betam}{\beta^-}
\newcommand{\rtau}{{\tau}} 
\newcommand{\suchthat}{\;|\;}
\newcommand{\cskew}{{/\!\!/}}
\newcommand{\spam}{\operatorname{span}}
\newlength\cellsize \setlength\cellsize{15\unitlength}
\newcommand\cellify[1]{\def\thearg{#1}\def\nothing{}%
\ifx\thearg\nothing
\vrule width0pt height\cellsize depth0pt\else
\hbox to 0pt{\usebox2\hss}\fi%
\vbox to 15\unitlength{
\vss
\hbox to 15\unitlength{\hss$#1$\hss}
\vss}}
\newcommand\tableau[1]{\vtop{\let\\=\cr
\setlength\baselineskip{-16000pt}
\setlength\lineskiplimit{16000pt}
\setlength\lineskip{0pt}
\halign{&\cellify{##}\cr#1\crcr}}}
\newcommand\expath[1]{%
\hbox to 0pt{\usebox3\hss}%
\vbox to 15\unitlength{
\vss
\hbox to 15\unitlength{\hss$#1$\hss}
\vss}}
\newcommand\bas[1]{\omit \vbox to \cellsize{ \vss \hbox to \cellsize{\hss$#1$\hss} \vss}}
\begin{document}

\title[Generalized skew Pieri rules]{Quasisymmetric and noncommutative skew Pieri rules}

\author{V. Tewari}
\address{Department of Mathematics, University of Washington, Seattle, WA 98105, USA}
\email{\href{mailto:vasut@math.washington.edu}{vasut@math.washington.edu}}
\author{S. van Willigenburg}
\address{Department of Mathematics, University of British Columbia, Vancouver, BC V6T 1Z2, Canada}
\email{\href{mailto:steph@math.ubc.ca}{steph@math.ubc.ca}}

\thanks{
The authors were supported in part by the National Sciences and Engineering Research Council of Canada.}
\subjclass[2010]{Primary 05E05, 16T05, 16W55; Secondary 05A05,  05E10}
\keywords{composition, composition poset, composition tableau,   noncommutative symmetric function, quasisymmetric function,  skew Pieri rule}

\begin{abstract}
In this note we derive skew Pieri rules in the spirit of Assaf-McNamara for skew quasisymmetric Schur functions using the Hopf algebraic techniques of Lam-Lauve-Sottile, and recover the original rules of Assaf-McNamara as a special case. We then apply these techniques a second time to obtain skew Pieri rules for skew noncommutative Schur functions.
\end{abstract}

\maketitle

\section{Introduction}\label{sec:intro} 
The Hopf algebra of quasisymmetric functions, $\Qsym$, was first defined explicitly in \cite{gessel}. It is a nonsymmetric generalization of the Hopf algebra of symmetric functions, and arises in many areas such as the representation theory of the 0-Hecke algebra \cite{BBSSZ, DKLT, konig, 0-Hecke}, probability \cite{hersh-hsiao, stanley-riffle},  and is the terminal object in the category of combinatorial Hopf algebras \cite{aguiar-bergeron-sottile}. Recently a basis of $\Qsym$, known as the basis of quasisymmetric Schur functions, was discovered \cite{QS}, which is a nonsymmetric generalization of the symmetric function basis of Schur functions. These quasisymmetric Schur functions arose from the combinatorics of Macdonald polynomials \cite{HHL}, have been used to resolve the conjecture that $\Qsym$ over the symmetric functions has a stable basis \cite{lauve-mason}, and  have initiated the dynamic research area of discovering other quasisymmetric Schur-like bases such as row-strict quasisymmetric Schur functions \cite{ferreira, mason-remmel},  Young quasisymmetric Schur functions \cite{LMvW}, dual immaculate  quasisymmetric functions \cite{BBSSZ}, type $B$ quasisymmetric Schur functions \cite{jingli, oguz}, quasi-key polynomials \cite{assafsearles, searles} and quasisymmetric Grothendieck polynomials \cite{monical}. Their name was aptly chosen since these functions not only naturally refine Schur functions, but also generalize many classical Schur function properties, such as the Littlewood-Richardson rule from the classical \cite{littlewood-richardson} to the generalized \cite[Theorem 3.5]{BLvW}, the  Pieri rules from the classical \cite{pieri} to the generalized \cite[Theorem 6.3]{QS} and the RSK algorithm from the classical \cite{knuth, robinson, schensted} to the generalized \cite[Procedure 3.3]{mason}.

Dual to $\Qsym$ is the Hopf algebra of noncommutative symmetric functions, $\Nsym$ \cite{GKLLRT}, whose basis dual to that of quasisymmetric Schur functions is the basis of noncommutative Schur functions \cite{BLvW}. By duality this basis again has a Littlewood-Richardson rule and RSK algorithm, and, due to noncommutativity, two sets of Pieri rules, one arising from multiplication on the right \cite[Theorem 9.3]{tewari} and one arising from multiplication on the left \cite[Corollary 3.8]{BLvW}. Therefore in both $\Qsym$ and $\Nsym$ a key question in this realm remains: Are there \emph{skew} Pieri rules for quasisymmetric and noncommutative Schur functions? In this note we give such rules that are analogous to that of their namesake Schur functions.

More precisely, the note is structured as follows. In Section~\ref{sec:comps} we review necessary notions on compositions and define operators on them. In Section~\ref{sec:QSYMNSYM} we recall $\Qsym$ and $\Nsym$, the bases of quasisymmetric Schur functions and noncommutative Schur functions, and their respective Pieri rules. In Section~\ref{sec:skew} we give skew Pieri rules for quasisymmetric Schur functions in Theorem~\ref{the:QSskewPieri} and recover the Pieri rules for skew shapes of Assaf and McNamara in Corollary~\ref{cor:AM}. We close with skew Pieri rules for noncommutative Schur functions in Theorem~\ref{the:NCskewPieri}.


\section{Compositions and diagrams}\label{sec:comps} A finite list of integers $\alpha = (\alpha _1, \ldots , \alpha _\ell)$ is called a \emph{weak composition} if $\alpha _1, \ldots , \alpha _\ell$ are nonnegative, is called a \emph{composition} if $\alpha _1, \ldots , \alpha _\ell$ are positive, and is called a \emph{partition} if $\alpha _1\geq \cdots \geq\alpha _\ell >0$. Note that every weak composition has an underlying composition, obtained by removing every zero, and in turn every composition has an underlying partition, obtained by reordering the list of integers into weakly decreasing order. Given $\alpha = (\alpha _1, \ldots , \alpha _\ell)$ we call the $\alpha _i$ the \emph{parts} of $\alpha$, also $\ell$ the \emph{length} of $\alpha$ denoted by $\ell(\alpha)$, and the sum of the parts of $\alpha$ the \emph{size} of $\alpha$ denoted by $|\alpha |$. The empty composition of length and size zero is denoted by $\emptyset$. If there exists $\alpha _{k+1} = \cdots = \alpha _{k+j} = i$ then we often abbreviate this to $i^j$. Also, given  weak compositions $\alpha= (\alpha_1,\ldots ,\alpha_{\ell})$ and $\beta=(\beta_1,\ldots,\beta_m)$, we define the \emph{concatenation} of $\alpha$ and $\beta$, denoted by $\alpha \beta$, to be the weak composition $(\alpha_1,\ldots,\alpha_\ell,\beta_1,\ldots,\beta_m)$. We define the \emph{near-concatenation} of $\alpha$ and $\beta$, denoted by $\alpha \odot \beta$, to be the weak composition $(\alpha_1,\ldots,\alpha_\ell + \beta_1,\ldots,\beta_m)$.
For example, if  $\alpha=(2,1,0,3)$ and $\beta=(1,4,1)$, then $\alpha\beta=(2,1,0,3,1,4,1)$ and $\alpha\odot \beta=(2,1,0,4,4,1)$.

The \emph{composition diagram} of a weak composition $\alpha$, also denoted by $\alpha$, is the array of left-justified boxes with $\alpha _i$ boxes in row $i$ from the \emph{top}, that is, following English notation for Young diagrams of partitions. We will often think of $\alpha$ as both a weak composition and as a composition diagram simultaneously, and hence future computations such as adding/subtracting 1 from the rightmost/leftmost part equalling $i$ (as a weak composition) are synonymous with adding/removing a box from the bottommost/topmost row of length $i$ (as a composition diagram).

\begin{example}\label{ex:comps}
The composition diagram of the weak composition of length 5, $\alpha=(2,0,4,3,6)$, is shown below.

$$\tableau{\ &\ \\ \\ \ &\ &\ &\ \\ \ &\ &\ \\\ &\ &\ &\ &\ &\ }$$

The  composition of length 4 underlying $\alpha$ is $(2,4,3,6)$, and the partition of length 4 underlying it is $(6,4,3,2)$. They all have size 15.
\end{example}


\subsection{Operators on compositions}\label{sec:ops} In this subsection we will recall four families of operators, each of which are dependent on a positive integer parameter. These families have already contributed to the theory of quasisymmetric and noncommutative Schur functions, and will continue to cement their central role as we shall see later. Although originally defined on compositions, we will define them in the natural way on weak compositions to facilitate easier proofs. The first of these operators is the box removing operator $\down$, which first appeared in the Pieri rules for quasisymmetric Schur functions \cite{QS}. The second of these is the appending operator $a$. These combine to define our third operator, the jeu de taquin or jdt operator $\addu$. This operator is pivotal in describing jeu de taquin slides on tableaux known as semistandard reverse composition tableaux and in describing the right Pieri rules for noncommutative Schur functions \cite{tewari}. Our fourth and final operator is the box adding operator $\addt$ \cite{BLvW, MNtewari}, which plays the same role in the left Pieri rules for noncommutative Schur functions \cite{BLvW} as $\addu$ does in the aforementioned right Pieri rules. Each of these operators is defined on weak compositions for every  integer $i\geq 0$. We note that
$$\down _0 = a_0 = \addu _0 = \addt _ 0 = \Id$$namely the identity map, which fixes the weak composition it is acting on. With this in mind we now define the remaining operators for $i\geq 1$.

The first \emph{box removing operator} on weak compositions, $\down _i$ for $i\geq 1$, is defined as follows. Let $\alpha$ be a weak composition. Then
$$\down _i (\alpha) = \alpha '$$where $\alpha '$ is the weak composition obtained by subtracting 1 from the rightmost part equalling $i$ in $\alpha$. If there is no such part then we define $\down _i(\alpha) = 0$.

\begin{example}\label{ex:down}
Let $\alpha=(2,1,2)$. Then $\down_1(\alpha)=(2,0,2)$ and $\down_2(\alpha)=(2,1,1)$.
\end{example}

Now we will discuss two notions that will help us state our theorems in a concise way later, as well as connect our results to those in the classical theory of symmetric functions. Let $i_1 < \cdots < i_k$ be a sequence of positive integers, and let $\alpha$ be a weak composition. Consider the operator $\down_{i_1}\cdots \down_{i_k}$ acting on the weak composition $\alpha$, and assume that the result is a valid weak composition. Then the boxes that are removed from $\alpha$ are said to form a \emph{$k$-horizontal strip}, and we can think of the operator $\down_{i_1}\cdots \down_{i_k}$ as removing a $k$-horizontal strip. Similarly, given a sequence of positive integers $i_1\geq \cdots \geq i_k$, consider the operator $\down_{i_1}\cdots \downrow_{i_k}$ acting on $\alpha$ and suppose that the result is a valid weak composition. Then the boxes that are removed from $\alpha$ are said to form a \emph{$k$-vertical strip}. As before, we can think of the operator $\down_{i_1}\cdots \downrow_{i_k}$ as removing a $k$-vertical strip.

\begin{example}\label{ex:horizontal and vertical strip}
Consider $\alpha=(2,5,1,3,1)$.  When we compute $\down_{1}\down_2\down_4\down_5(\alpha)$, the operator $\down_{1}\down_2\down_4\down_5$ removes the $4$-horizontal strip shaded in red from $\alpha$.

$$
\ytableausetup{smalltableaux,boxsize=0.5em}
\begin{ytableau}
*(white) & *(red!80)\\
*(white) &*(white) &*(white) &*(red!80) &*(red!80) \\
*(white) \\
*(white) &*(white) &*(white) \\
*(red!80)\\
\end{ytableau}
$$

When we compute $\down_3\down_2\down_1\down_1(\alpha)$, the operator $\down_3\down_2\down_1\down_1$ removes the $4$-vertical strip shaded in red from $\alpha$.

$$
\ytableausetup{smalltableaux,boxsize=0.5em}
\begin{ytableau}
*(white) & *(red!80)\\
*(white) &*(white) &*(white) &*(white) &*(white) \\
*(red!80) \\
*(white) &*(white) &*(red!80) \\
*(red!80)\\
\end{ytableau}
$$

\end{example}

\begin{remark}\label{rem:horizontal and vertical strip}
If we consider partitions as Young diagrams in English notation, then the above notions of horizontal and vertical strips  coincide with their classical counterparts. For example, consider the operator $\down_1\down_2\down_4\down_5$ acting on the partition $(5,3,2,1,1)$, in contrast to acting on the composition $(2,5,1,3,1)$ as in Example \ref{ex:horizontal and vertical strip}. Then the  $4$-horizontal strip shaded in red is removed.

$$
\ytableausetup{smalltableaux,boxsize=0.5em}
\begin{ytableau}
*(white) &*(white) &*(white) &*(red!80) &*(red!80) \\
*(white) &*(white) &*(white) \\
*(white) & *(red!80)\\
*(white) \\
*(red!80)
\end{ytableau}
$$

\end{remark}

We now define the second \emph{appending operator} on weak compositions, $a_i$ for $i\geq 1$, as follows. Let $\alpha = (\alpha _1, \ldots , \alpha _{\ell(\alpha)})$ be a weak composition. Then
$$a _i (\alpha) = (\alpha _1, \ldots , \alpha _{\ell(\alpha)}, i)$$namely, the weak composition obtained by appending a part $i$ to the end of $\alpha$. 

\begin{example}\label{ex:append} Let $\alpha = (2,1,3)$. Then $a_2 ((2,1,3))= (2,1,3,2)$. Meanwhile, $a_j \down _2 ((3,5,1)) = 0$ for all $j\geq 0$ since $\down _2 ((3,5,1)) = 0$.
\end{example}

With the definitions of $a_i$ and $\down _i$ we define the third \emph{jeu de taquin} or \emph{jdt operator} on weak compositions, $\addu _i$ for $i\geq 1$, as
$$\addu _i = a_i \down_1\down_2\down_3 \cdots \down _{i-1}.$$
 
 \begin{example}\label{ex:jdt}
We will compute $\addu_4(\alpha)$ where $\alpha = (3,5,2,4,1,2)$. This corresponds to computing $a_4\down_1\down_2\down_3(\alpha)$. Now 
\begin{eqnarray*}
\down_1\down_2\down_3(\alpha)&=&\down_1\down_2\down_3((3,5,2,4,1,2))\\&=&\down_1\down_2((2,5,2,4,1,2))\\&=& \down_1((2,5,2,4,1,1))\\&=& (2,5,2,4,1,0).
\end{eqnarray*} Hence $\addu_4(\alpha)=(2,5,2,4,1,0,4)$.
\end{example}

Let $i_1 < \cdots < i_k$ be a sequence of positive integers, and let $\alpha$ be a weak composition. Consider the operator $\addu_{i_k}\cdots \addu_{i_1}$ acting on the weak composition $\alpha$, and assume that the result is a valid weak composition. Then the boxes that are added to $\alpha$ are said to form a \emph{$k$-right horizontal strip}, and we can think of the operator $\addu_{i_k}\cdots \addu_{i_1}$ as adding a $k$-right horizontal strip. Similarly, given a sequence of positive integers $i_1\geq \cdots \geq i_k$, consider the operator $\addu_{i_k}\cdots \addu_{i_1}$ acting on $\alpha$ and suppose that the result is a valid weak composition. Then the boxes that are added to $\alpha$ are said to form a \emph{$k$-right vertical strip}. As before, we can think of the operator $\addu_{i_k}\cdots \addu_{i_1}$ as adding a $k$-right vertical strip.

Lastly, we define the fourth \emph{box adding operator} on weak compositions, $\addt _i$ for $i\geq 1$, as follows. Let $\alpha = (\alpha _1, \ldots , \alpha _{\ell(\alpha)})$ be a weak composition. Then
$$\addt _1 (\alpha) =  (1, \alpha _1, \ldots , \alpha _{\ell(\alpha)})$$and for $i\geq 2$
$$\addt _i (\alpha) =  (\alpha _1, \ldots , \alpha _j + 1, \ldots ,\alpha _{\ell(\alpha)})$$where $\alpha _j$ is the leftmost part equalling $i-1$ in $\alpha$. If there is no such part, then we define $\addt _i (\alpha) = 0$.

\begin{example}\label{ex:boxadd}
Consider the composition $\alpha=(3,2,3,1,2)$. Then $\addt_1(\alpha)=(1,3,2,3,1,2)$, $\addt_2(\alpha)=(3,2,3,2,2)$, $\addt_3(\alpha)=(3,3,3,1,2)$, $\addt_4(\alpha)=(4,2,3,1,2)$ and $\addt_i(\alpha)=0$ for all $i\geq 5$.
\end{example}

As with the jdt operators let $i_1 < \cdots < i_k$ be a sequence of positive integers, and let $\alpha$ be a weak composition. Consider the operator $\addt_{i_k}\cdots \addt_{i_1}$ acting on the weak composition $\alpha$, and assume that the result is a valid weak composition. Then the boxes that are added to $\alpha$ are said to form a \emph{$k$-left horizontal strip}, and we can think of the operator $\addt_{i_k}\cdots \addt_{i_1}$ as adding a $k$-left horizontal strip. Likewise, given a sequence of positive integers $i_1\geq \cdots \geq i_k$, consider the operator $\addt_{i_k}\cdots \addt_{i_1}$ acting on $\alpha$ and suppose that the result is a valid weak composition. Then the boxes that are added to $\alpha$ are said to form a \emph{$k$-left vertical strip}, and we can think of the operator $\addt_{i_k}\cdots \addt_{i_1}$ as adding a $k$-left vertical strip.

The box adding operator is also needed to define the composition poset \cite[Definition 2.3]{BLvW}, which in turn will be needed to define skew quasisymmetric Schur functions in the next section.

\begin{definition}\label{def:RcLc} The  \emph{composition poset}, denoted by $\Lc$, is the poset consisting of the set of all compositions equipped with cover relation $\coverc$ such that for compositions  $\alpha, \beta$
$$\beta \coverc \alpha \mbox{ if and only if } \alpha = \addt _i (\beta)$$for some $i\geq1$.
\end{definition}

The order relation  $\lessc$ in $\Lc$ is obtained by taking the transitive closure of the cover relation  $\coverc$.

\begin{example}\label{ex:boxaddLc}
We have that $(3,2,3,1,2) \coverc (4,2,3,1,2)$ by Example~\ref{ex:boxadd}.
\end{example}

\section{Quasisymmetric and noncommutative symmetric functions}\label{sec:QSYMNSYM}
We now recall the basics of graded Hopf algebras before focussing on the graded Hopf algebra of quasisymmetric functions \cite{gessel} and its dual, the graded Hopf algbera of noncommutative symmetric functions \cite{GKLLRT}. We say that $\mathcal{H}$ and $\mathcal{H}^*$ form a pair of  dual graded Hopf algebras each over a field $K$ if there exists a duality pairing $\langle \ ,\ \rangle : \mathcal{H}\otimes \mathcal{H}^{*} \longrightarrow K$,  for which the structure of $\mathcal{H}^*$ is dual to $\mathcal{H}$ that respects the grading, and vice versa. More precisely, the duality pairing pairs the elements of any basis $\{B_i\}_{i\in I}$ of the graded piece $\mathcal{H}^N$ for some index set $I$, and the elements of its dual basis $\{D_i\}_{i\in I}$ of the graded piece $(\mathcal{H}^N)^*$,
given by
$\langle B_i, D_j\rangle = \delta_{ij}$, where the \emph{Kronecker delta}\index{Kronecker delta} $\delta_{ij} = 1$ if $i=j$ and 0 otherwise.
Duality is exhibited in that the product coefficients of one basis are the coproduct coefficients of its dual basis and vice versa, that is, \begin{eqnarray*}
B_i  \cdot B_j = \sum_k b^k_{i,j} B_k &\qquad \Longleftrightarrow\qquad &
\Delta D_k = \sum_{i,j} b^k_{i,j} D_i \otimes D_j  \\
D_i   \cdot D_j = \sum_k d^k_{i,j} D_k &\qquad \Longleftrightarrow\qquad &
\Delta B_k = \sum_{i,j} d^k_{i,j} B_i \otimes B_j
\end{eqnarray*}where $\cdot$ denotes \emph{product} and $\Delta$ denotes \emph{coproduct}. Graded Hopf algebras also have an \emph{antipode} $S: \mathcal{H}\longrightarrow\mathcal{H}$, whose general definition we will not need. Instead we will state the specific antipodes, as needed, later.  Lastly, before we define our specific graded Hopf algebras, we recall one Hopf algebraic lemma, which will play a key role later. For $h\in \mathcal{H}$ and $a\in \mathcal{H}^{*}$, let the following be the respective coproducts in Sweedler notation.
\begin{eqnarray}\label{eq:coproductH}
\Delta (h)&=& \displaystyle\sum_{h} h_{(1)}\otimes h_{(2)}
\end{eqnarray}
\begin{eqnarray}\label{eq:coproductHdual}
\Delta (a)&=&\displaystyle\sum_{a} a_{(1)}\otimes a_{(2)}
\end{eqnarray}
Now define left actions of $\mathcal{H}^{*}$ on $\mathcal{H}$ and $\mathcal{H}$ on $\mathcal{H}^{*}$, both denoted by $\rightharpoonup$, as 
\begin{eqnarray}\label{eq:HdualactingonH}
a \rightharpoonup h &=& \displaystyle\sum_{h}\langle h_{(2)},a\rangle h_{(1)}, 
\end{eqnarray}
\begin{eqnarray}\label{eq:HactingonHdual}
h\rightharpoonup a &=& \displaystyle\sum_{a} \langle h,a_{(2)}\rangle a_{(1)},
\end{eqnarray}
where $a\in \mathcal{H}^{*}$, $h\in \mathcal{H}$. Then we have the following.

\begin{lemma}\cite{lam-lauve-sottile}\label{lem:magiclemma}
For all $g,h \in \mathcal{H}$ and $a\in \mathcal{H}^{*}$, we have that
\begin{eqnarray*}
(a\rightharpoonup g)\cdot h &= & \displaystyle\sum_{h} \left( S(h_{(2)})\rightharpoonup a \right)\rightharpoonup \left(g\cdot h_{(1)}\right)
\end{eqnarray*}
where $S:\mathcal{H}\longrightarrow \mathcal{H}$ is the  antipode.
\end{lemma}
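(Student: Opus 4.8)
The plan is to prove the identity by a direct computation in Sweedler notation, expanding the right-hand side and then collapsing it using coassociativity together with the antipode and counit axioms of $\mathcal{H}$. This is the argument of \cite{lam-lauve-sottile}, and it goes through verbatim in our graded-dual setting (where $\mathcal{H}$ and $\mathcal{H}^{*}$ will be $\Nsym$ and $\Qsym$): every homogeneous component is finite dimensional and all structure maps are homogeneous, so the a priori infinite sums that appear are finite in each degree.

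The first step is to record the two \emph{adjunction} identities tying the actions $\rightharpoonup$ to the pairing. Since $\mathcal{H}$ and $\mathcal{H}^{*}$ are dual Hopf algebras, the pairing satisfies $\langle x\cdot y,a\rangle=\sum_{a}\langle x,a_{(1)}\rangle\langle y,a_{(2)}\rangle$, and feeding this into \eqref{eq:HactingonHdual} gives
\[
\langle g,\ h\rightharpoonup a\rangle=\langle g\cdot h,\ a\rangle,\qquad g,h\in\mathcal{H},\ a\in\mathcal{H}^{*}.
\]
Together with \eqref{eq:HdualactingonH}, these are the only structural facts needed.

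Now fix $g,h\in\mathcal{H}$ and $a\in\mathcal{H}^{*}$, and write $\Delta(h)=\sum_{h}h_{(1)}\otimes h_{(2)}$. Put $b=S(h_{(2)})\rightharpoonup a\in\mathcal{H}^{*}$ and $x=g\cdot h_{(1)}\in\mathcal{H}$, so a generic summand of the right-hand side of the lemma is $b\rightharpoonup x$. By \eqref{eq:HdualactingonH}, $b\rightharpoonup x=\sum_{x}\langle x_{(2)},b\rangle\,x_{(1)}$, and by the adjunction identity above $\langle x_{(2)},b\rangle=\langle x_{(2)}\cdot S(h_{(2)}),a\rangle$. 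Expanding $\Delta(x)=\Delta(g)\,\Delta(h_{(1)})$ and relabelling, via coassociativity, the three resulting tensor legs of $h$ as $h_{(1)}\otimes h_{(2)}\otimes h_{(3)}$, the right-hand side of the lemma becomes
\[
\sum \langle\, g_{(2)}\,h_{(2)}\,S(h_{(3)}),\ a\,\rangle\; g_{(1)}\,h_{(1)}.
\]
Applying the antipode axiom to $h_{(2)}$ and then the counit axiom yields $\sum_{h}h_{(1)}\otimes h_{(2)}\,S(h_{(3)})=h\otimes 1_{\mathcal{H}}$, so this sum collapses to $\sum_{g}\langle g_{(2)},a\rangle\,g_{(1)}\cdot h$, which is precisely $(a\rightharpoonup g)\cdot h$ by \eqref{eq:HdualactingonH}, as desired.

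There is no genuine obstacle beyond careful bookkeeping; the two points that deserve attention are (i) not conflating the two actions written $\rightharpoonup$ — one is $\mathcal{H}^{*}$ acting on $\mathcal{H}$, the other $\mathcal{H}$ acting on $\mathcal{H}^{*}$ — and (ii) checking that the sums make sense, which follows from the grading. I would include the computation above for completeness while also citing \cite{lam-lauve-sottile}; one may alternatively recognise the identity as the compatibility relation in the Heisenberg double $\mathcal{H}^{*}\#\mathcal{H}$, which is where it naturally lives, but the direct verification is the shortest route.
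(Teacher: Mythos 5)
Your proof is correct: the paper itself gives no argument for this lemma, citing \cite{lam-lauve-sottile}, and your Sweedler computation (adjunction $\langle g, h\rightharpoonup a\rangle=\langle g\cdot h,a\rangle$, expansion of $\Delta(g\cdot h_{(1)})$, then the antipode and counit axioms collapsing $\sum_h h_{(1)}\otimes h_{(2)}S(h_{(3)})$ to $h\otimes 1$) is exactly the standard proof found in that reference, so there is nothing to add beyond the citation. One cosmetic note: in the paper's first application (Theorem~\ref{the:QSskewPieri}) the roles are $\mathcal{H}=\Qsym$ and $\mathcal{H}^{*}=\Nsym$ (they swap for Theorem~\ref{the:NCskewPieri}), the reverse of the aside in your opening paragraph, though this does not affect the abstract argument.
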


The graded Hopf algebra of quasisymmetric functions, $\Qsym$ \cite{gessel}, is a subalgebra of $\bC [[x_1, x_2, \ldots]]$ with a basis given by the following functions, which in turn are reliant on the natural bijection between compositions and sets, for which we first need to recall that $[i]$ for  $i\geq 1$ denotes the set $\{1,2,\ldots , i\}$. Now we can state the bijection. Given a composition $\alpha = ( \alpha _1 , \ldots , \alpha _{\ell(\alpha)})$, there is a natural subset of $[|\alpha|-1]$ corresponding to it, namely, 
$$\set (\alpha) = \{ \alpha _1 , \alpha _1 + \alpha _2, \ldots , \alpha _1+\alpha _2 + \cdots + \alpha _{\ell(\alpha)-1}\} \mbox{ and } \set((|\alpha|))=\emptyset.$$Conversely, given a subset $S = \{ s_1< \cdots < s_{|S|}\}\subseteq [N-1]$, there is a natural composition of size $N$ corresponding to it, namely, $$\comp (S) = (s_1, s_2 - s_1,  \ldots , N-s_{|S|}) \mbox{ and } \comp(\emptyset)=(N).$$

\begin{definition}\label{def:Fbasis}
Let $\alpha = (\alpha _1, \ldots , \alpha _{\ell(\alpha)})$ be a composition. Then the \emph{fundamental quasisymmetric function} $F_\alpha$ is defined to be $F_\emptyset = 1$ and
$$F_\alpha = \sum x_{i_1} \cdots x_{i_{|\alpha|}}$$where the sum is over all $|\alpha|$-tuples $(i_1, \ldots , i_{|\alpha|})$ of indices satisfying
$$i_1\leq \cdots \leq i_{|\alpha|} \mbox{ and } i_j<i_{j+1} \mbox{ if } j \in \set(\alpha).$$
\end{definition}

\begin{example}\label{ex:Fbasis}
$F_{(1,2)} = x_1x_2^2 + x_1x_3^2 + \cdots + x_1x_2x_3 + x_1x_2x_4 + \cdots.$
\end{example}

Then $\Qsym$ is a graded Hopf algebra 
$$\Qsym = \bigoplus _{N\geq 0} \Qsym ^N$$where
$$\Qsym ^N = \spam \{ F_\alpha \suchthat |\alpha| = N \}.$$The product for this basis is inherited from the product of monomials and Definition~\ref{def:Fbasis}. The coproduct \cite{gessel} is given by $\Delta(1)=1\otimes1$ and
\begin{equation}\label{eq:Fcoproduct}
\Delta(F_\alpha)= \sum _{\beta\gamma = \alpha \atop \mbox{ or }\beta\odot\gamma = \alpha}F_\beta \otimes F_\gamma
\end{equation}and the antipode, which was discovered independently in \cite{ehrenborg-1, malvenuto-reutenauer}, is given by $S(1)=1$ and
\begin{equation}\label{eq:antipode}
S(F_\alpha)= (-1)^{|\alpha|}F_{\comp(\set(\alpha)^c)}
\end{equation}where $\set(\alpha)^c$ is the complement of $\set(\alpha)$ in the set $[|\alpha| -1]$.

\begin{example}\label{ex:Fcoprodantipode}
$$\Delta (F_{(1,2)})=F_{(1,2)}\otimes 1 + F_{(1,1)}\otimes F_{(1)} + F_{(1)}\otimes F_{(2)} + 1\otimes F_{(1,2)}$$and $S(F_{(1,2)})=(-1)^3 F_{(2,1)}$.
\end{example}

However, this is not the only basis of $\Qsym$ that will be useful to us. For the second basis we will need to define skew composition diagrams and then standard skew composition tableaux.

For the first of these, let $\alpha, \beta$ be two compositions such that $\beta \lessc \alpha$. Then we define the \emph{skew composition diagram} $\alpha \cskew \beta $ to be the array of boxes that are contained in $\alpha$ but not in $\beta$. That is, the boxes that arise in the saturated chain $\beta \coverc \cdots \coverc \alpha$. We say the \emph{size} of $\alpha \cskew \beta$ is $|\alpha \cskew \beta | = |\alpha| - |\beta|$. Note that if $\beta=\emptyset$, then we recover the composition diagram $\alpha$.

\begin{example}\label{ex:skewshape} The skew composition diagram $(2,1,3)\cskew (1)$ is drawn below with $\beta$ denoted by $\bullet$.
$$\tableau{\ &\ \\
\ \\
\bullet&\ &\ \\}$$
\end{example}

We can now define standard skew composition tableaux. Given a saturated chain, $C$, in $\Lc$
$$\beta = \alpha ^0 \coverc \alpha ^1 \coverc \cdots \coverc \alpha ^{|\alpha \cskew \beta|} = \alpha$$we define the \emph{standard skew composition tableau} $\rtau _C$ of \emph{shape} $\alpha\cskew \beta$ to be the skew composition diagram $\alpha \cskew \beta$ whose boxes are filled with integers such that the number $|\alpha \cskew \beta| -i +1$ appears in the box in $\rtau _C$ that exists in $\alpha ^i$ but not $\alpha ^{i-1}$ for $1\leq i\leq |\alpha \cskew \beta|$. If $\beta = \emptyset$, then we say that we have a \emph{standard composition tableau}. Given a standard skew composition tableau, $\rtau$, whose shape has size $N$ we say that the \emph{descent set} of $\rtau$ is
$$\des (\rtau) = \{ i \suchthat i+1 \mbox{ appears weakly right of } i  \} \subseteq [N-1]$$and the corresponding \emph{descent composition} of $\rtau$ is $\comp(\rtau)= \comp(\des(\rtau))$.

\begin{example}\label{ex:skewCT} The saturated chain 
$$(1)\coverc (2) \coverc (1,2) \coverc (1,1,2) \coverc (1,1,3) \coverc (2,1,3)$$gives rise to the standard skew composition tableau $\rtau$ of shape $(2,1,3)\cskew (1)$ below.
$$\tableau{3 &1\\
4 \\
\bullet&5 &2 \\}$$Note that $\des(\rtau) = \{1,3, 4\}$ and hence $\comp(\rtau) = (1,2,1,1)$.
\end{example}

With this is mind we can now define skew quasisymmetric Schur functions \cite[Proposition 3.1]{BLvW}.

\begin{definition}\label{def:QSbasis}
Let $\alpha \cskew \beta$ be a skew composition diagram. Then the \emph{skew quasisymmetric Schur function} $\qs _{\alpha\cskew \beta}$ is defined to be
$$\qs _{\alpha \cskew \beta} = \sum F_{\comp (\rtau)}$$where the sum is over all standard skew composition tableaux $\rtau$ of shape $\alpha\cskew \beta$. When $\beta = \emptyset$ we call $\qs _\alpha$ a \emph{quasisymmetric Schur function}.
\end{definition}

\begin{example}\label{ex:QSbasis} We can see that $\qs _{(n)} = F_{(n)}$ and $\qs _{(1^n)} = F_{(1^n)}$ and
$$\qs _{(2,1,3)\cskew (1)} = F_{(2,1,2)}+ F_{(2,2,1)} + F_{(1,2,1,1)}$$from the standard skew composition tableaux below.
$$\tableau{2 &1\\
3 \\
\bullet&5 &4 \\}\qquad 
\tableau{2 &1\\
4 \\
\bullet&5 &3 \\}\qquad 
\tableau{3 &1\\
4 \\
\bullet&5 &2 \\}$$
\end{example}

Moreover, the set of all quasisymmetric Schur functions forms another basis for $\Qsym$ such that
$$\Qsym ^N = \spam \{ \qs _\alpha \suchthat |\alpha| = N\}$$and while explicit formulas for their product and antipode are still unknown, their coproduct \cite[Definition 2.19]{BLvW} is given by 
\begin{eqnarray}\label{eq:coproductquasischur}
\Delta(\qs_{\alpha})=\displaystyle\sum_{\gamma} \qs_{\alpha\cskew\gamma}\otimes \qs_{\gamma}
\end{eqnarray}where the sum is over all compositions $\gamma$.  As discussed in the introduction, quasisymmetric Schur functions have many interesting algebraic and combinatorial properties, one of the first of which to be discovered was the exhibition of Pieri rules that utilise our box removing operators \cite[Theorem 6.3]{QS}.

\begin{theorem}\emph{(Pieri rules for quasisymmetric Schur functions)}\label{the:QSPieri}  
Let $\alpha $ be a composition and $n$ be a positive integer. Then
\begin{align*}
\qs_{\alpha}\cdot \qs_{(n)}=\sum  \qs_{\alphap}
\end{align*}
where $\alphap$  is a composition such that $\alpha$ can be obtained by removing an $n$-horizontal strip from it.

Similarly,
\begin{align*}
\qs_{\alpha}\cdot \qs_{(1^n)}=\sum  \qs_{\alphap }
\end{align*}
where $\alphap$  is a composition such that $\alpha$ can be obtained by removing an $n$-vertical strip from it.
\end{theorem}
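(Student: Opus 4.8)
The plan is to pass to the fundamental basis and then prove the identity by an explicit descent-preserving bijection, exactly as one proves the classical Pieri rule via insertion.

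By Example~\ref{ex:QSbasis} we have $\qs_{(n)} = F_{(n)}$ and $\qs_{(1^n)} = F_{(1^n)}$, and by Definition~\ref{def:QSbasis} each quasisymmetric Schur function is the generating function $\qs_\mu = \sum_{\rtau} F_{\comp(\rtau)}$ over the standard composition tableaux $\rtau$ of shape $\mu$. Hence the horizontal-strip identity is equivalent to
\[ \Bigl(\sum_{\rtau \text{ of shape } \alpha} F_{\comp(\rtau)}\Bigr)\cdot F_{(n)} \;=\; \sum_{\alpha^+}\ \sum_{\sigma \text{ of shape } \alpha^+} F_{\comp(\sigma)}, \]
the outer sum on the right over all compositions $\alpha^+$ with $\alpha = \down_{i_1}\cdots\down_{i_n}(\alpha^+)$ for some $i_1<\cdots<i_n$, i.e.\ over the $\alpha^+$ admitting an $n$-horizontal strip in the sense of Section~\ref{sec:ops}; the vertical case reads the same with $F_{(1^n)}$ on the left and $i_1\ge\cdots\ge i_n$ on the right. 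I would expand the left-hand product using the shuffle rule for fundamental quasisymmetric functions: $F_{\comp(\rtau)}\cdot F_{(n)} = \sum_U F_{\comp(U)}$, where $U$ ranges over the descent sets of shuffles of a permutation word realizing $\comp(\rtau)$ on $\{1,\dots,|\alpha|\}$ with the increasing word $(|\alpha|+1,\dots,|\alpha|+n)$, and likewise with the decreasing word $(|\alpha|+n,\dots,|\alpha|+1)$ for the $F_{(1^n)}$ factor. This presents the left side as a signless sum indexed by pairs: a standard composition tableau of shape $\alpha$, together with a way of threading the $n$ new (largest) entries through its reading word.

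The crux is then to build an insertion/jeu de taquin map carrying each such pair to a genuine standard composition tableau $\sigma$ of a larger shape, with the $n$ new entries occupying $n$ new cells; the appending operator followed by the jdt operators $\addu_i$ of Section~\ref{sec:ops} are precisely what move each new cell into the correct row. One then verifies: (i) $\sigma$ is a legitimate standard composition tableau, the column-strictness and triple conditions being preserved --- this is the delicate step, of exactly the flavour of the case analyses in Subsection~\ref{subsec:ud}; (ii) $\comp(\sigma)$ equals the descent composition of the input pair, so the $F$-indices match; (iii) the shape of $\sigma$ is a composition $\alpha^+$ from which $\alpha$ is recovered by removing an $n$-horizontal (respectively $n$-vertical) strip, matching the $\down$-operator description of Section~\ref{sec:ops}; and (iv) the map is a bijection, its inverse being the reverse rectification that slides the $n$ cells of the added strip back out.

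The main obstacle is this third step: showing the insertion is well-defined, descent-preserving, and invertible, and that the cells it adds form exactly a horizontal (or vertical) strip in the $\down$-operator sense --- the cancellations here are genuinely more subtle than in the partition case. An alternative that avoids the explicit bijection is a two-stage induction: dispatch $n=1$ first (this merely says the $\coverq$-covers of $\alpha$ in $\Qc$ each occur once), then bootstrap to general $n$; but the passage from that semistandard-style recursion back to a count of standard composition tableaux reintroduces the same combinatorics. Finally, one may note that, by the $\Qsym$--$\Nsym$ pairing with $\langle\qs_\mu,\ncs_\nu\rangle=\delta_{\mu\nu}$, the coefficient of $\qs_\gamma$ in $\qs_\alpha\cdot\qs_{(n)}$ equals the coefficient of $\ncs_\gamma$ in $\ncs_\alpha\cdot\ncs_{(n)}$, so the statement is equivalent to the right Pieri rule for noncommutative Schur functions; as that rule is established elsewhere (and only revisited later here) this does not give a self-contained proof at this point.
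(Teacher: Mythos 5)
This theorem is not proved in the paper at all: it is imported verbatim from \cite[Theorem~6.3]{QS}, so there is no internal proof for your argument to be measured against. Judged on its own, your proposal is a strategy outline rather than a proof. Reducing to the fundamental basis and expanding $F_{\comp(\rtau)}\cdot F_{(n)}$ by the shuffle rule is fine, but everything after that is asserted, not established: the insertion map sending a pair (standard composition tableau of shape $\alpha$, shuffle pattern for the $n$ largest letters) to a standard composition tableau of a larger shape is never constructed, and its well-definedness, descent preservation, bijectivity, and the fact that the added cells form exactly an $n$-horizontal (resp.\ vertical) strip in the $\down$-operator sense are precisely the content of the theorem. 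Note also that the operators $a_i$, $\down_i$, $\addu_i$ of Section~\ref{sec:ops} act on compositions, i.e.\ on shapes, not on fillings, so invoking them does not by itself define the required map on tableaux; what is actually needed is an insertion algorithm on composition tableaux (Mason's analogue of RSK), together with the delicate triple-condition checks, which is how the cited proof in \cite{QS} proceeds.

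One further point is mathematically wrong as stated: your closing duality remark. Multiplication in $\Qsym$ is dual to \emph{comultiplication} in $\Nsym$, so the coefficient of $\qs_\gamma$ in $\qs_\alpha\cdot\qs_{(n)}$ equals the coefficient of $\ncs_\alpha\otimes\ncs_{(n)}$ in $\Delta(\ncs_\gamma)$, not the coefficient of $\ncs_\gamma$ in $\ncs_\alpha\cdot\ncs_{(n)}$. The two rules are genuinely different: the right Pieri rule of Theorem~\ref{the:RightPieri} is governed by the $\addu$-covers (the poset $\Rc$), whereas the present theorem is governed by the $\down$-covers (the poset $\Qc$), and already for $\alpha=(2,1,3)$ these cover sets differ (compare Examples~\ref{ex:URc} and~\ref{ex:DQc}). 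So the claimed equivalence fails, quite apart from the circularity you already concede; the correct dual statement of this theorem is a rule for skewing noncommutative Schur functions by $(n)$ or $(1^n)$, which is how it is used in the proof of Theorem~\ref{the:QSskewPieri}.
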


Dual to $\Qsym$ is the graded Hopf algebra of noncommutative symmetric functions, $\Nsym$, itself a subalgebra of $\bC << x_1, x_2, \ldots >>$ with many interesting bases \cite{GKLLRT}. The one of particular interest to us is the following \cite[Section 2]{BLvW}.

\begin{definition}\label{def:NCbasis} Let $\alpha$ be a composition. Then the \emph{noncommutative Schur function} $\ncs _\alpha$ is the function under the duality pairing $\langle \ ,\ \rangle :\Qsym \otimes \Nsym \rightarrow \bC$ that satisfies 
$$\langle \qs _\alpha , \ncs _\beta \rangle = \delta _{\alpha\beta}$$where $\delta _{\alpha\beta} = 1$ if $\alpha = \beta$ and $0$ otherwise.\end{definition}

Noncommutative Schur functions also have rich and varied algebraic and combinatorial properties, including Pieri rules, although due to the noncommutative nature of $\Nsym$ there are now Pieri rules arising both from multiplication on the right \cite[Theorem 9.3]{tewari}, and from multiplication on the left \cite[Corollary 3.8]{BLvW}. We include them both here for completeness, and for use later.

\begin{theorem}\emph{(Right Pieri rules for noncommutative Schur functions)}\label{the:RightPieri} 
Let $\alpha $ be a composition and $n$ be a positive integer. Then
\begin{align*}
\ncs_{\alpha }\cdot \ncs_{(n)}=\sum \ncs_{\alphap}
\end{align*}
where $\alphap$  is a composition such that it can be obtained by adding an $n$-right horizontal strip to $\alpha$.

Similarly,
\begin{align*}
\ncs_{\alpha }\cdot \ncs_{(1^n)}=\sum \ncs_{\alphap}
\end{align*}
where $\alphap$  is a composition such that it can be obtained by adding an $n$-right vertical strip to $\alpha$.
\end{theorem}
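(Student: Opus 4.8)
The plan is to derive this as the statement dual to the Pieri rule for quasisymmetric Schur functions under the pairing $\langle\,,\,\rangle\colon\Qsym\otimes\Nsym\to\bC$. Because this pairing identifies multiplication in $\Nsym$ with comultiplication in $\Qsym$, the coefficient of $\ncs_\gamma$ in $\ncs_\alpha\cdot\ncs_{(n)}$ equals the coefficient of $\qs_\alpha\otimes\qs_{(n)}$ in $\Delta\qs_\gamma$; equivalently, it is the coefficient of $\qs_{(n)}=F_{(n)}$ in an appropriate skew quasisymmetric Schur function obtained from $\qs_\gamma$ by deleting $\alpha$. So the theorem reduces to showing that this coefficient equals $1$ precisely when $\gamma$ is obtained from $\alpha$ by adding an $n$-right horizontal strip, and vanishes otherwise.

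To evaluate it I would expand $\qs_\gamma=\sum_\rtau F_{\comp(\rtau)}$ over the standard composition tableaux of shape $\gamma$ (Definition~\ref{def:QSbasis}), apply the elementary coproduct rule on the fundamental basis term by term, and collect the terms whose right tensor factor is $F_{(n)}$. These correspond to the standard composition tableaux of shape $\gamma$ in which the $n$ largest entries occupy a sub-tableau with empty descent set --- that is, a ``strip'' that can be peeled off --- leaving a standard composition tableau of some shape $\alpha$. One then has to check that the shapes $\alpha$ arising in this way are exactly those from which $\gamma$ is reached by adding an $n$-right horizontal strip, with total multiplicity $1$ in each case; restated in the poset $\Rc$, this says there is exactly one saturated chain $\alpha\coverr\cdots\coverr\gamma$ of length $n$ whose successive $\addu$-indices strictly increase.

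The crux is this last combinatorial matching. Since $\addu_i=a_i\downrow_{[i-1]}$ is precisely a jeu de taquin slide (Section~\ref{sec:ops}), what must be verified is that rectifying the peeled-off top strip of a standard composition tableau produces a unique increasing sequence $i_1<\cdots<i_n$ with $\gamma=\addurow_{\{i_1,\dots,i_n\}}(\alpha)$, and conversely that each such sequence arises once. Here Lemma~\ref{lem:needed for Pieri}, giving $\addurow_I=a_{i_k}\downrow_{[i_k]\setminus I}$, is the decisive device: it converts a chain of $\addu$-moves into a single append together with a prescribed set of box removals, which can then be read directly off the descent data of the tableaux. A more economical alternative, and probably the one to use here, is to quote Tewari's right Pieri rule \cite[Theorem~9.3]{tewari} as is and then apply Lemma~\ref{lem:needed for Pieri} together with the definition of $\addu$ to check that Tewari's explicit list of compositions coincides with those obtained by adding an $n$-right horizontal strip --- this equivalence being exactly the simplification carried out in Section~\ref{sec:easyrightPieris}.
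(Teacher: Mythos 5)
The paper does not actually prove Theorem~\ref{the:RightPieri}: it is imported verbatim from \cite[Theorem 9.3]{tewari}, and since the notions of $n$-right horizontal and $n$-right vertical strip are \emph{defined} in Section~\ref{sec:ops} as the boxes added by $\addu_{i_n}\cdots\addu_{i_1}$ with increasing (respectively weakly decreasing) indices, the statement here is Tewari's theorem with no translation left to do. So your ``more economical alternative'' --- quote \cite[Theorem 9.3]{tewari} and match the dictionary --- is exactly what the paper does, except that even Lemma~\ref{lem:needed for Pieri} is not needed for this; that lemma enters only later, in Theorem~\ref{the:right Pieri rule for row} of Section~\ref{sec:easyrightPieris}, where the already-known rule is rewritten as an append followed by removal of a horizontal strip.

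Your primary route (a duality argument) has a genuine gap at precisely the point you flag as ``the crux.'' By duality the coefficient of $\ncs_\gamma$ in $\ncs_\alpha\cdot\ncs_{(n)}$ is $\langle\qs_{\gamma\cskew (n)},\ncs_\alpha\rangle$, i.e.\ the coefficient of $\qs_\alpha$ in the skew function $\qs_{\gamma\cskew(n)}$ (note the coproduct of Observation~\ref{ob:skewingisharpooning} puts the skew piece in the \emph{left} tensor factor, so there is no ready-made ``$\qs_\gamma$ with $\alpha$ deleted'' object on the $\Rc$ side --- positing one is close to circular). Expanding $\qs_\gamma$ into fundamentals and peeling off the last $n$ letters of each word only gets you an $F$-expansion of $\qs_{\gamma\cskew(n)}$; regrouping those fundamentals into quasisymmetric Schur functions $\qs_\alpha$, and showing the shapes $\alpha$ that occur (each with multiplicity one) are exactly those with $\gamma=\addurow_J(\alpha)$ for an increasing $J$ of size $n$, is the entire content of the theorem. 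Moreover the standard composition tableaux of the paper are built from $\Lc$-chains, so the peeled strips live naturally in the left poset, and passing from there to the jdt operators and $\Rc$ is exactly the backward-jeu-de-taquin analysis carried out in \cite{tewari} (or, equivalently, an instance of the Littlewood--Richardson machinery of \cite{BLvW}). Lemma~\ref{lem:needed for Pieri} is an identity among the operators $\addu_i$, $a_i$, $\down_i$; it cannot substitute for that tableau-level bijection, so as written your first argument does not close.
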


\begin{theorem}\emph{(Left Pieri rules for noncommutative Schur functions)}\label{the:LeftPieri} 
Let $\alpha $ be a composition and $n$ be a positive integer. Then
\begin{align*}
\ncs_{(n)} \cdot \ncs_{\alpha } =\sum \ncs_{\alphap}
\end{align*}
where $\alphap$  is a composition such that it can be obtained by adding an $n$-left horizontal strip to $\alpha$.

Similarly,
\begin{align*}
\ncs_{(1^n)} \cdot \ncs_{\alpha }  =\sum \ncs_{\alphap }
\end{align*}
where $\alphap$  is a composition such that it can be obtained by adding an $n$-left vertical strip to $\alpha$.\end{theorem}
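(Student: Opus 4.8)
The plan is to deduce everything by Hopf duality from an identity in $\Qsym$. Since $\{\qs_\alpha\}$ and $\{\ncs_\alpha\}$ are dual bases,
\begin{align*}
[\ncs_\gamma]\bigl(\ncs_{(n)}\cdot\ncs_\alpha\bigr)=\langle\qs_\gamma,\ \ncs_{(n)}\cdot\ncs_\alpha\rangle=\langle\,\ncs_{(n)}^{\perp}\qs_\gamma,\ \ncs_\alpha\,\rangle=[\qs_\alpha]\bigl(\ncs_{(n)}^{\perp}\qs_\gamma\bigr),
\end{align*}
where $\ncs_{(n)}^{\perp}$ is the operator on $\Qsym$ adjoint to left multiplication by $\ncs_{(n)}$ on $\Nsym$, and similarly with $(1^n)$ in place of $(n)$. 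Hence the two statements are equivalent to
\begin{align*}
\ncs_{(n)}^{\perp}\qs_\gamma=\sum_\alpha\qs_\alpha,\qquad \ncs_{(1^n)}^{\perp}\qs_\gamma=\sum_\alpha\qs_\alpha,
\end{align*}
the first sum over all $\alpha$ from which $\gamma$ is obtained by adding an $n$-left horizontal strip, the second over all $\alpha$ from which $\gamma$ is obtained by adding an $n$-left vertical strip.

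To make $\ncs_{(n)}^{\perp}$ computable I would first identify $\ncs_{(n)}=\nch_n$ and $\ncs_{(1^n)}=\nce_n$. This follows from $\qs_{(n)}=F_{(n)}$, $\qs_{(1^n)}=F_{(1^n)}$ and the fact that $\{F_\beta\}$ is dual to the ribbon basis $\{\ncr_\beta\}$: by Definition~\ref{def:NCbasis} one needs $\langle\qs_\gamma,\ncr_{(n)}\rangle=\delta_{\gamma,(n)}$, and $\langle\qs_\gamma,\ncr_{(n)}\rangle$ counts standard composition tableaux of shape $\gamma$ with empty descent set, of which --- by the bound on the number of available columns --- there is exactly one when $\gamma=(n)$ and none otherwise (and dually for $(1^n)$). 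Then from the ribbon product $\ncr_{(n)}\cdot\ncr_{(\epsilon_1,\epsilon_2,\dots)}=\ncr_{(n,\epsilon_1,\epsilon_2,\dots)}+\ncr_{(n+\epsilon_1,\epsilon_2,\dots)}$ together with $\langle\nch_n^{\perp}F_\delta,\ncr_\epsilon\rangle=\langle F_\delta,\nch_n\cdot\ncr_\epsilon\rangle$ one reads off $\nch_n^{\perp}F_\delta=F_{\delta'}$, where $\delta'$ is $\delta$ with $n$ subtracted from its first part (that part being deleted if it becomes $0$, and the expression being $0$ if $\delta_1<n$); the analogous computation for $\nce_n^{\perp}$ peels off a vertical $n$-strip from the front of $\delta$.

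Next I would expand $\qs_\gamma=\sum_\tau F_{\comp(\tau)}$ over standard composition tableaux $\tau$ of shape $\gamma$ and apply $\nch_n^{\perp}$ termwise. A term survives precisely when $\comp(\tau)_1\geq n$, i.e.\ when $1,\dots,n-1\notin\des(\tau)$. Since $\tau$ records a saturated chain $\emptyset\coverc\cdots\coverc\gamma$ in $\Lc$ and a cover via $\addt_c$ adds a box in column $c$, this condition says exactly that the last $n$ cover steps add boxes in strictly increasing columns $c_1<\cdots<c_n$, i.e.\ $\gamma=\addt_{c_n}\cdots\addt_{c_1}(\alpha)$ is an $n$-left horizontal strip over the composition $\alpha$ reached after the first $|\gamma|-n$ steps. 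Truncating the chain at $\alpha$ and relabelling entries yields a standard composition tableau $\sigma$ of shape $\alpha$, and a short check with $\set$ and $\comp$ gives $\comp(\tau)'=\comp(\sigma)$, so $\nch_n^{\perp}F_{\comp(\tau)}=F_{\comp(\sigma)}$. The assignment $\tau\mapsto(\alpha,\sigma)$ is then a bijection onto pairs consisting of a composition $\alpha$ with $\gamma$ obtained from $\alpha$ by an $n$-left horizontal strip together with a standard composition tableau of shape $\alpha$, and summing over it gives $\nch_n^{\perp}\qs_\gamma=\sum_\alpha\qs_\alpha$. The vertical case is handled identically, with $\nch_n$ replaced by $\nce_n$ and the surviving condition replaced by ``$\comp(\tau)$ begins with at least $n-1$ parts equal to $1$,'' which translates into the last $n$ cover steps having weakly decreasing column indices --- an $n$-left vertical strip.

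The crux is the bijection in the third paragraph. One must verify that the surviving tableaux are exactly those whose final $n$ cover steps form a left horizontal (resp.\ vertical) strip in the sense of Section~\ref{sec:ops}, that truncation genuinely produces a standard composition tableau of shape $\alpha$, and that the map is well-defined and invertible --- in particular that the strip joining a given $\alpha$ to $\gamma$ is unique, which holds because its added boxes lie in distinct columns. Matching the fixed reading order of the entries of a standard composition tableau against the prescribed monotone order of the indices $c_1,\dots,c_n$ is where the commutation relations of Subsection~\ref{subsec:td}, in particular Lemmas~\ref{lem: commutativity of t_i and d_j} and~\ref{lem:zero contribution}, enter, being used to move the relevant portion of the chain into the required normal form. (One could alternatively try to derive the left rules from the right rules of Theorem~\ref{the:RightPieri} via an algebra anti-automorphism of $\Nsym$, but tracking the quasi-Schur basis through such a map is itself delicate, so the route above seems preferable.)
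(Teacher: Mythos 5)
Your argument is essentially correct, but note that the paper itself does not prove Theorem~\ref{the:LeftPieri} at all: it is quoted from \cite{BLvW} (Corollary~3.8 there), where it arises as a special case of the noncommutative Littlewood--Richardson rule. Your route is therefore genuinely different and more self-contained: you identify $\ncs_{(n)}=\nch_n=\ncr_{(n)}$ and $\ncs_{(1^n)}=\nce_n=\ncr_{(1^n)}$ by counting standard composition tableaux with descent composition $(n)$, resp.\ $(1^n)$; you compute the adjoint of left multiplication on the fundamental basis via the ribbon product $\ncr_{(n)}\cdot\ncr_{\epsilon}=\ncr_{(n)\bullet\epsilon}+\ncr_{(n+\epsilon_1,\epsilon_2,\ldots)}$, correctly obtaining ``subtract $n$ from the first part'' (resp.\ peel a leading vertical $n$-strip); and you then translate the survival condition $1,\ldots,n-1\notin\des(\tau)$ (resp.\ $\subseteq\des(\tau)$) into the statement that the last $n$ covers of the chain in $\Lc$ recorded by $\tau$ use $\addt_{c_1},\ldots,\addt_{c_n}$ with $c_1<\cdots<c_n$ (resp.\ $c_1\geq\cdots\geq c_n$), i.e.\ an $n$-left horizontal (resp.\ vertical) strip; the relabelling check $\comp(\tau)'=\comp(\sigma)$ and the uniqueness of the strip joining $\alpha$ to $\gamma$ (the multiset of added columns is determined by comparing the number of parts $\geq c$ in $\alpha$ and $\gamma$) make the truncation map a bijection, which gives the theorem. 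Two small caveats: your argument imports the ribbon--fundamental duality and the ribbon multiplication rule, which are standard (\cite{GKLLRT, gessel, malvenuto-reutenauer}) but not developed in this paper, and your closing claim that Lemmas~\ref{lem: commutativity of t_i and d_j} and~\ref{lem:zero contribution} are needed to put the chain in ``normal form'' is spurious --- no such normalization is required, since the descent condition already forces the last $n$ covers into the prescribed monotone order; this misattribution does not affect the correctness of the proof. What your approach buys is a direct Pieri-level proof by Hopf duality, in the same spirit as the skew arguments of Section~\ref{sec:skew}, whereas the cited source obtains the rule only after establishing the full Littlewood--Richardson machinery.
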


Note that since quasisymmetric and noncommutative Schur functions are indexed by  compositions, \emph{if any parts of size 0 arise during computation, then they are ignored}.

\section{Generalized skew Pieri rules}\label{sec:skew} 
\subsection{Quasisymmetric skew Pieri rules}\label{subsec:QSymskewPieri} We now turn our attention to proving skew Pieri rules for skew quasisymmetric Schur functions. The statement of the rules is in the spirit of the Pieri rules for skew shapes of Assaf and McNamara \cite{assaf-mcnamara}, and this is no coincidence as we recover their rules as a special case in Corollary~\ref{cor:AM}. However first we prove a crucial proposition.

\begin{proposition}\label{ob:skewingisharpooning} Let $\alpha, \beta$ be compositions. Then
$\ncsb \rightharpoonup \qs_{\alpha} = \qs_{\alpha\cskew\beta}$.
\end{proposition}

\begin{proof}
Recall  Equation~\eqref{eq:coproductquasischur} states that
$$
\Delta(\qs_{\alpha})=\displaystyle\sum_{\gamma} \qs_{\alpha\cskew\gamma}\otimes \qs_{\gamma}
$$
where the sum is over all compositions $\gamma$. Thus using Equations \eqref{eq:HdualactingonH} and \eqref{eq:coproductquasischur} we obtain
\begin{eqnarray}\label{eq:harpoons}
\ncsb \rightharpoonup \qs_{\alpha}= \displaystyle\sum_{\gamma} \langle \qs_{\gamma},\ncsb\rangle \qs_{\alpha\cskew\gamma}
\end{eqnarray}
where the sum is over all compositions $\gamma$. Since by Definition~\ref{def:NCbasis}, $\langle \qs_{\gamma},\ncsb\rangle$ equals $1$ if $\beta= \gamma$ and  $0$ otherwise, the claim follows.
\end{proof}

\begin{remark}\label{rem:terms that equal 0 for poset reasons}
The proposition above does not tell us when $\ncsb \rightharpoonup \qs_{\alpha} = \qs_{\alpha\cskew\beta}$ equals $0$. However, by the definition of $\alpha \cskew \beta$ this is precisely when $\alpha$ and $\beta$ satisfy $\beta \not\lessc \alpha$. Consequently in the theorem below the \emph{nonzero} contribution will only be from those $\alphap$ and $\betam$ that satisfy $\betam \lessc \alphap$. As always if any parts of size 0 arise during computation, then they are ignored.
\end{remark}

\begin{theorem}\label{the:QSskewPieri}
Let $\alpha, \beta$ be compositions and $n$ be a positive integer. Then
\begin{align*}
\qs_{\alpha\cskew\beta}\cdot \qs_{(n)}=\sum_{i+j=n}(-1)^j\qs_{\alphap\cskew\betam}
\end{align*}
where $\alphap$  is a composition such that $\alpha$ can be obtained by removing an $i$-horizontal strip from it, and $\beta^{-}$ is a composition such that it can be obtained by removing a $j$-vertical strip from $\beta$.

Similarly,
\begin{align*}
\qs_{\alpha\cskew\beta}\cdot \qs_{(1^n)}=\sum_{i+j=n}(-1)^j\qs_{\alphap\cskew\betam}
\end{align*}
where $\alphap$  is a composition such that $\alpha$ can be obtained by removing an $i$-vertical strip from it, and $\beta^{-}$ is a composition such that it can be obtained by removing a $j$-horizontal strip from $\beta$.
\end{theorem}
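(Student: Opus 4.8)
The plan is to derive both identities from the Hopf-algebraic identity of Lam--Lauve--Sottile (Lemma~\ref{lem:magiclemma}), applied to the dual pair $\mathcal{H} = \Qsym$, $\mathcal{H}^{*} = \Nsym$ paired as in Definition~\ref{def:NCbasis}. For the first identity take $g = \qs_{\alpha}$, $h = \qs_{(n)}$ and $a = \ncsb$. By Observation~\ref{ob:skewingisharpooning} the left-hand side $(a \rightharpoonup g)\cdot h$ of the lemma is precisely $\qs_{\alpha\cskew\beta}\cdot\qs_{(n)}$, so it remains to unwind the right-hand side $\sum_{h}\bigl(S(h_{(2)})\rightharpoonup a\bigr)\rightharpoonup\bigl(g\cdot h_{(1)}\bigr)$.

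Three computations do the job. First, the coproduct: since $\qs_{(n)} = F_{(n)} = h_{n}$ lies in $\Sym$ and $\Sym\hookrightarrow\Qsym$ is a Hopf subalgebra, $\Delta(\qs_{(n)}) = \sum_{k=0}^{n}\qs_{(k)}\otimes\qs_{(n-k)}$. Second, the antipode: again inside $\Sym$ we have $S(\qs_{(n-k)}) = S(h_{n-k}) = (-1)^{n-k}e_{n-k} = (-1)^{n-k}\qs_{(1^{n-k})}$. Third, the outer action of $\Qsym$ on $\Nsym$: using the adjunction $\langle\qs_{\gamma},\,f\rightharpoonup a\rangle = \langle\qs_{\gamma}\cdot f,\,a\rangle$ implicit in \eqref{eq:HactingonHdual} together with the vertical Pieri rule of Theorem~\ref{the:QSPieri}, one checks that $\qs_{(1^{m})}\rightharpoonup\ncsb = \sum\ncs_{\betam}$, the sum ranging over all compositions $\betam$ from which $\beta$ is obtained by removing an $m$-vertical strip. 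Substituting these three facts into Lemma~\ref{lem:magiclemma} yields
\[
\qs_{\alpha\cskew\beta}\cdot\qs_{(n)} = \sum_{k=0}^{n}(-1)^{n-k}\Bigl(\qs_{(1^{n-k})}\rightharpoonup\ncsb\Bigr)\rightharpoonup\bigl(\qs_{\alpha}\cdot\qs_{(k)}\bigr).
\]

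Finally, expand $\qs_{\alpha}\cdot\qs_{(k)} = \sum\qs_{\alphap}$ over those $\alphap$ from which $\alpha$ is obtained by removing a $k$-horizontal strip (Theorem~\ref{the:QSPieri}), and apply $\ncs_{\betam}\rightharpoonup(-)$ to each $\qs_{\alphap}$, which gives $\qs_{\alphap\cskew\betam}$ by Observation~\ref{ob:skewingisharpooning}. Writing $i = k$, $j = n - k$ collects the terms into $\sum_{i+j=n}(-1)^{j}\qs_{\alphap\cskew\betam}$ with the strips as stated; terms with $\betam\not\lessc\alphap$ vanish automatically, so no restriction on the summation range need be imposed by hand (cf.\ Remark~\ref{rem:terms that equal 0 for poset reasons}). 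The second identity is proved the same way with $h = \qs_{(1^{n})} = e_{n}$: now $\Delta(\qs_{(1^{n})}) = \sum_{k}\qs_{(1^{k})}\otimes\qs_{(1^{n-k})}$, $S(\qs_{(1^{n-k})}) = (-1)^{n-k}h_{n-k} = (-1)^{n-k}\qs_{(n-k)}$, and the roles of the horizontal and vertical Pieri rules are interchanged, so that $\qs_{(n-k)}\rightharpoonup\ncsb$ removes a horizontal strip from $\beta$ while $\qs_{\alpha}\cdot\qs_{(1^{k})}$ removes a vertical strip from $\alphap$.

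The routine ingredients are the standard $\Sym$-facts about $\Delta$ and $S$ of $h_{n}$ and $e_{n}$ and the two invocations of Theorem~\ref{the:QSPieri}. The step that needs genuine care is the identification of the outer action $\qs_{(1^{m})}\rightharpoonup\ncsb$ (respectively $\qs_{(m)}\rightharpoonup\ncsb$) with the operation of removing a vertical (respectively horizontal) strip \emph{from} $\beta$: this is where the coproduct of $\Nsym$ — which is dual to multiplication of quasisymmetric Schur functions — enters, and one must be careful to invoke the correct Pieri rule and to keep straight that the strip is deleted from $\beta$, not adjoined to it, since a sign and the whole combinatorial shape of the answer depend on getting this adjunction right.
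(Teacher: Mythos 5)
Your proposal is correct in substance and follows the paper's proof essentially step for step: the same application of Lemma~\ref{lem:magiclemma} with $g=\qs_{\alpha}$, $h=\qs_{(n)}$ (respectively $\qs_{(1^n)}$), $a=\ncsb$, the same coproduct and antipode facts for $F_{(n)}$ and $F_{(1^n)}$, the identification of both harpoon actions via Observation~\ref{ob:skewingisharpooning} and duality, and the two Pieri rules of Theorem~\ref{the:QSPieri}, with the vanishing of terms where $\betam\not\lessc\alphap$ handled exactly as in Remark~\ref{rem:terms that equal 0 for poset reasons}. One wording slip to fix: your stated identification of the outer action is reversed --- the duality computation $\langle\qs_{\gamma},\qs_{(1^m)}\rightharpoonup\ncsb\rangle=\langle\qs_{\gamma}\cdot\qs_{(1^m)},\ncsb\rangle$ gives $\qs_{(1^m)}\rightharpoonup\ncsb=\sum\ncs_{\betam}$ summed over those $\betam$ \emph{obtained from} $\beta$ by removing an $m$-vertical strip (so $|\betam|=|\beta|-m$), not over compositions from which $\beta$ is obtained by removal; your closing caveat shows you intend the correct direction, and with that sentence corrected your argument coincides with the paper's.
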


\begin{proof}
For the first part of the theorem, our aim is to calculate $\qs_{\alpha\cskew\beta}\cdot \qs_{(n)}$, which in light of Proposition \ref{ob:skewingisharpooning}, is the same as calculating $(\ncsb \rightharpoonup \qs_{\alpha})\cdot \qs_{(n)}$. 

Taking $a=\ncsb$, $g=\qs_{\alpha}$ and $h=\qs_{(n)}$ in Lemma \ref{lem:magiclemma} gives the LHS as $(\ncsb \rightharpoonup \qs_{\alpha})\cdot \qs_{(n)}$. For the RHS observe that, by Definition~\ref{def:QSbasis}, $\qs _{(n)} = {F} _{(n)}$ and by Equation~\eqref{eq:Fcoproduct} we have that
\begin{eqnarray}\label{eq:coproductF}
\Delta({F}_{(n)})&=& \sum_{i+j=n}{F}_{(i)}\otimes {F}_{(j)}.
\end{eqnarray}

Substituting these in yields 
\begin{eqnarray}\label{eq:firststeprhs}
\displaystyle\sum_{i+j=n}(S({F}_{(j)})\rightharpoonup \ncsb)\rightharpoonup(\qs_{\alpha}\cdot {F}_{(i)}).
\end{eqnarray}

Now, by Equation~\eqref{eq:antipode}, we have that $S({F}_{(j)})=(-1)^j{F}_{(1^j)}$.
This reduces \eqref{eq:firststeprhs} to 
\begin{eqnarray}\label{eq:secondsteprhs}
\displaystyle\sum_{i+j=n}((-1)^j{F}_{(1^j)}\rightharpoonup \ncsb)\rightharpoonup(\qs_{\alpha}\cdot {F}_{(i)}).
\end{eqnarray}

We will first deal with the task of evaluating ${F}_{(1^j)}\rightharpoonup \ncsb$. We need to invoke Equation \eqref{eq:HactingonHdual} and thus we need $\Delta(\ncsb)$. Assume that 
\begin{eqnarray}
\Delta(\ncsb)=\sum_{\gamma,\delta}b_{\gamma,\delta}^{\beta}\ncs_{\gamma}\otimes \ncs_{\delta}
\end{eqnarray}
where the sum is over all compositions $\gamma,\delta$.
Thus Equation~\eqref{eq:HactingonHdual} yields
\begin{eqnarray}
{F}_{(1^j)}\rightharpoonup \ncsb&=& \displaystyle\sum_{\gamma,\delta} b_{\gamma,\delta}^{\beta}\langle {F}_{(1^j)},\ncs_{\delta}\rangle\ncs_{\gamma}.
\end{eqnarray}
Observing that, by Definition~\ref{def:QSbasis}, ${F}_{(1^j)}=\qs_{(1^j)}$ and that, by Definition~\ref{def:NCbasis}, $\langle \qs_{(1^j)},\ncs_{\delta}\rangle$ equals $1$ if $\delta = (1^j)$ and equals  $0$ otherwise, we obtain
\begin{eqnarray}\label{eq:eharpooningncsstep1}
{F}_{(1^j)}\rightharpoonup \ncsb &=& \displaystyle\sum_{\gamma} b_{\gamma,(1^j)}^{\beta}\ncs_{\gamma}.
\end{eqnarray}
Since by Definition~\ref{def:NCbasis} and the duality pairing we have that $\langle  \qs_{\gamma}\otimes \qs_{\delta},\Delta(\ncsb)\rangle=\langle \qs_{\gamma}\cdot\qs_{\delta},\ncsb\rangle=b_{\gamma,\delta}^{\beta}$, we get that
\begin{eqnarray}
\langle \qs_{\gamma}\cdot\qs_{(1^j)},\ncsb\rangle&=& b_{\gamma,(1^j)}^{\beta}.
\end{eqnarray}

The Pieri rules for quasisymmetric Schur functions in Theorem~\ref{the:QSPieri} state that $b_{\gamma,(1^j)}^{\beta}$ is $1$ if there exists a weakly decreasing sequence $\ell_1\geq \ell_2\geq\cdots \geq \ell_j$ such that $\down_{\ell_1}\cdots \down_{\ell_j}(\beta)=\gamma$, and is $0$ otherwise. Thus this reduces Equation~\eqref{eq:eharpooningncsstep1} to
\begin{eqnarray}\label{eq:eharpooningncsstep2}
{F}_{(1^j)}\rightharpoonup \ncsb &=& \displaystyle\sum_{\substack{\down_{\ell_1}\cdots \down_{\ell_j}(\beta)=\gamma\\\ell_1\geq\cdots \geq \ell_j}}\ncs_{\gamma}.
\end{eqnarray}

Since $\qs_{(i)}={F}_{(i)}$, by Definition~\ref{def:QSbasis}, the Pieri rules in Theorem~\ref{the:QSPieri} also imply that
\begin{eqnarray}\label{eq:rowpieriruleqschur}
\qs_{\alpha}\cdot {F}_{(i)}&=& \displaystyle \sum_{\substack{\down_{r_1}\cdots \down_{r_i}(\varepsilon)=\alpha\\r_1<\cdots < r_i}}\qs_{\varepsilon}.
\end{eqnarray}

Using Equations~\eqref{eq:eharpooningncsstep2} and \eqref{eq:rowpieriruleqschur} in \eqref{eq:secondsteprhs}, we get 
\begin{eqnarray}
\displaystyle\sum_{i+j=n}((-1)^j{F}_{(1^j)}\rightharpoonup \ncsb)\rightharpoonup(\qs_{\alpha}\cdot {F}_{(i)})&=& \displaystyle\sum_{i+j=n}\left((-1)^j\displaystyle\sum_{\substack{\down_{\ell_1}\cdots \down_{\ell_j}(\beta)=\gamma\\\ell_1\geq\cdots \geq \ell_j}}\ncs_{\gamma}\right)\rightharpoonup \left( \sum_{\substack{\down_{r_1}\cdots \down_{r_i}(\varepsilon)=\alpha\\r_1<\cdots < r_i}}\qs_{\varepsilon}
\right).
\nonumber\\
\end{eqnarray}
Using Proposition \ref{ob:skewingisharpooning}, we obtain that
\begin{eqnarray}\label{eq:penultimateskewpieri}
\displaystyle\sum_{i+j=n}((-1)^j{F}_{(1^j)}\rightharpoonup \ncsb)\rightharpoonup(\qs_{\alpha}\cdot {F}_{(i)})&=&\displaystyle\sum_{i+j=n}\left(\sum_{\substack{\down_{\ell_1}\cdots \down_{\ell_j}(\beta)=\gamma\\\ell_1\geq\cdots \geq \ell_j\\\down_{r_1}\cdots \down_{r_i}(\varepsilon)=\alpha\\r_1<\cdots < r_i}}(-1)^j\qs_{\varepsilon\cskew\gamma}\right).
\end{eqnarray}

Thus
\begin{eqnarray}\label{eq:skewpierirule-row}
\qs_{\alpha\cskew\beta}\cdot \qs_{(n)}&=&\displaystyle\sum_{i+j=n}\left(\sum_{\substack{\down_{\ell_1}\cdots \down_{\ell_j}(\beta)=\gamma\\\ell_1\geq\cdots \geq \ell_j\\\down_{r_1}\cdots \down_{r_i}(\varepsilon)=\alpha\\r_1<\cdots < r_i}}(-1)^j\qs_{\varepsilon\cskew\gamma}\right).
\end{eqnarray} The first part of the theorem now follows from the definitions of $i$-horizontal strip and $j$-vertical strip.

For the second part of the theorem we use the same method as the first part, but this time calculate
$$(\ncsb \rightharpoonup \qs_{\alpha})\cdot \qs_{(1^n)}.$$
\end{proof}

\begin{remark}\label{rem:noomega}
Notice that as opposed to the classical case where one can apply the $\omega$ involution to obtain the corresponding Pieri rule, we can not do this here. This is because the image of the skew quasisymmetric Schur functions under the $\omega$ involution is not yet known explicitly. Notice that the $\omega$ map applied to quasisymmetric Schur functions results in the row-strict quasisymmetric Schur functions of Mason and Remmel \cite{mason-remmel}.
\end{remark}


\begin{example}\label{ex:skewQSPierirow}
Let us compute $\qs_{(1,3,2)\cskew (2,1)}\cdot \qs_{(2)}$. We first need to compute all compositions $\gamma$ that can be obtained by removing a vertical strip of size at most 2 from $\beta=(2,1)$. These compositions correspond to the white boxes in the diagrams below, while the boxes in the darker shade of red correspond to the vertical strips that are removed from $\beta$.

$$
\ytableausetup{smalltableaux,boxsize=0.5em}
\begin{ytableau}
*(white) & *(white)\\
*(white)
\end{ytableau}
\hspace{5mm}
\begin{ytableau}
*(white) & *(white)\\
*(red!80)
\end{ytableau}
\hspace{5mm}
\begin{ytableau}
*(white) & *(red!80)\\
*(white)
\end{ytableau}
\hspace{5mm}
\begin{ytableau}
*(white) & *(red!80)\\
*(red!80)
\end{ytableau}
$$

Next we need to compute all compositions $\varepsilon$ such that a horizontal strip of size at most $2$ can be removed from it so as to obtain $\alpha$. We list these $\varepsilon$s below with the boxes in the lighter shade of green corresponding to horizontal strips that need to be removed to obtain $\alpha$.

$$
\ytableausetup{smalltableaux,boxsize=0.5em}
\begin{ytableau}
*(white)\\
*(white) & *(white) & *(white)\\
*(white) & *(white)
\end{ytableau}
\hspace{5mm}
\begin{ytableau}
*(white)\\
*(white) & *(white) & *(white)\\
*(white) & *(white)\\
*(green!70)\\
\end{ytableau}
\hspace{5mm}
\begin{ytableau}
*(white)\\
*(white) & *(white) & *(white)\\
*(green!70)\\
*(white) & *(white)
\end{ytableau}
\hspace{5mm}
\begin{ytableau}
*(white)\\
*(green!70)\\
*(white) & *(white) & *(white)\\
*(white) & *(white)
\end{ytableau}
\hspace{5mm}
\begin{ytableau}
*(white)\\
*(white) & *(white) & *(white)\\
*(white) & *(white) & *(green!70)\\
\end{ytableau}
\hspace{5mm}
\begin{ytableau}
*(white)\\
*(white) & *(white) & *(white) &*(green!70)\\
*(white) & *(white)
\end{ytableau}
\hspace{5mm}
\begin{ytableau}
*(white)\\
*(white) & *(white) & *(white) \\
*(green!70)\\
*(white) & *(white) & *(green!70)
\end{ytableau}
\hspace{5mm}
\begin{ytableau}
*(white)\\
*(white) & *(white) & *(white) & *(green!70)\\
*(green!70)\\
*(white) & *(white) 
\end{ytableau}
\hspace{5mm}
\begin{ytableau}
*(white)\\
*(green!70)\\
*(white) & *(white) & *(white)\\
*(white) & *(white) & *(green!70)\\
\end{ytableau}
\hspace{5mm}
\begin{ytableau}
*(white)\\
*(green!70)\\
*(white) & *(white) & *(white) & *(green!70)\\
*(white) & *(white) 
\end{ytableau}
\hspace{5mm}
\begin{ytableau}
*(white)\\
*(white) & *(white) & *(white) \\
*(white) & *(white) \\
*(green!70) & *(green!70)\\
\end{ytableau}
\hspace{5mm}
\begin{ytableau}
*(white)\\
*(white) & *(white) & *(white) \\
*(white) & *(white) & *(green!70)\\
*(green!70) \\
\end{ytableau}
\hspace{5mm}
\begin{ytableau}
*(white)\\
*(white) & *(white) & *(white) & *(green!70) \\
*(white) & *(white) \\
*(green!70) \\
\end{ytableau}
\hspace{5mm}
$$
$$
\ytableausetup{smalltableaux,boxsize=0.5em}
\begin{ytableau}
*(white)\\
*(white) & *(white) & *(white)  \\
*(white) & *(white)  & *(green!70) & *(green!70)\\
\end{ytableau}
\hspace{5mm}
\begin{ytableau}
*(white)\\
*(white) & *(white) & *(white) & *(green!70) \\
*(white) & *(white)  & *(green!70)\\
\end{ytableau}
\hspace{5mm}
\begin{ytableau}
*(white)\\
*(white) & *(white) & *(white)  & *(green!70) & *(green!70)\\
*(white) & *(white)  \\
\end{ytableau}
$$

Now to compute $\qs_{(1,3,2)\cskew (2,1)}\cdot \qs_{(2)}$, our result tells us that for every pair of compositions in the above lists $(\varepsilon,\gamma)$ such that (the number of green boxes in $\varepsilon$)+(the number of red boxes in $\gamma$)=2, and $\gamma \lessc \varepsilon$ we have a term $\qs_{\varepsilon \cskew \gamma}$ with a sign $(-1)^{\text{number of red boxes}}$.  Hence  we have the following expansion, suppressing commas and parentheses in  compositions for ease of comprehension.
\begin{align*}
\qs_{132\cskew 21}\cdot \qs_{2}=&\qs_{132\cskew 1}-\qs_{1321\cskew 11}-\qs_{1312\cskew 2}-\qs_{1132\cskew2}-\qs_{1132\cskew 11}-\qs_{133\cskew 2}-\qs_{133\cskew 11}\\&-\qs_{142\cskew 2}-\qs_{142\cskew 11}+\qs_{1133\cskew 21}+\qs_{1142\cskew 21}+\qs_{1322\cskew 21}+\qs_{1331\cskew 21}\\&+\qs_{1421\cskew 21}+\qs_{143\cskew 21}+\qs_{152\cskew 21}
\end{align*}
\end{example}


\begin{example}\label{ex:skewQSPiericol}
Let us compute $\qs_{(1,3,2)\cskew (2,1)}\cdot \qs_{(1,1)}$. We first need to compute all compositions $\gamma$ that can be obtained by removing a horizontal strip of size at most 2 from $\beta=(2,1)$. These compositions correspond to the white boxes in the diagrams below, while the boxes in the darker shade of red correspond to the horizontal strips that are removed from $\beta$.

$$
\ytableausetup{smalltableaux,boxsize=0.5em}
\begin{ytableau}
*(white) & *(white)\\
*(white)
\end{ytableau}
\hspace{5mm}
\begin{ytableau}
*(white) & *(white)\\
*(red!80)
\end{ytableau}
\hspace{5mm}
\begin{ytableau}
*(white) & *(red!80)\\
*(white)
\end{ytableau}
\hspace{5mm}
\begin{ytableau}
*(white) & *(red!80)\\
*(red!80)
\end{ytableau}
$$

Next we need to compute all compositions $\varepsilon$ such that a vertical strip of size at most $2$ can be removed from it so as to obtain $\alpha$. We list these $\varepsilon$s below with the boxes in the lighter shade of green corresponding to vertical strips that need to be removed to obtain $\alpha$.

$$
\ytableausetup{smalltableaux,boxsize=0.5em}
\begin{ytableau}
*(white)\\
*(white) & *(white) & *(white)\\
*(white) & *(white)
\end{ytableau}
\hspace{5mm}
\begin{ytableau}
*(white)\\
*(white) & *(white) & *(white)\\
*(white) & *(white)\\
*(green!70)\\
\end{ytableau}
\hspace{5mm}
\begin{ytableau}
*(white)\\
*(white) & *(white) & *(white)\\
*(green!70)\\
*(white) & *(white)
\end{ytableau}
\hspace{5mm}
\begin{ytableau}
*(white)\\
*(green!70)\\
*(white) & *(white) & *(white)\\
*(white) & *(white)
\end{ytableau}
\hspace{5mm}
\begin{ytableau}
*(white)\\
*(white) & *(white) & *(white)\\
*(white) & *(white) & *(green!70)\\
\end{ytableau}
\hspace{5mm}
\begin{ytableau}
*(white)\\
*(white) & *(white) & *(white) &*(green!70)\\
*(white) & *(white)
\end{ytableau}
\hspace{5mm}
\begin{ytableau}
*(white)\\
*(white) & *(white) & *(white)\\
*(white) & *(white) \\
*(green!70)\\
*(green!70)\\
\end{ytableau}
\hspace{5mm}
\begin{ytableau}
*(white)\\
*(white) & *(white) & *(white)\\
*(green!70)\\
*(white) & *(white) \\
*(green!70)\\
\end{ytableau}
\hspace{5mm}
\begin{ytableau}
*(white)\\
*(green!70)\\
*(white) & *(white) & *(white)\\
*(white) & *(white) \\
*(green!70)\\
\end{ytableau}
\hspace{5mm}
\begin{ytableau}
*(white)\\
*(white) & *(white) & *(white)\\
*(green!70)\\
*(green!70)\\
*(white) & *(white) \\
\end{ytableau}
\hspace{5mm}
\begin{ytableau}
*(white)\\
*(green!70)\\
*(white) & *(white) & *(white)\\
*(green!70)\\
*(white) & *(white) \\
\end{ytableau}
\hspace{5mm}
\begin{ytableau}
*(white)\\
*(green!70)\\
*(green!70)\\
*(white) & *(white) & *(white)\\
*(white) & *(white) \\
\end{ytableau}
\hspace{5mm}
\begin{ytableau}
*(white)\\
*(white) & *(white) & *(white) \\
*(white) & *(white) & *(green!70) \\
*(green!70) \\
\end{ytableau}
\hspace{5mm}
$$
$$
\ytableausetup{smalltableaux,boxsize=0.5em}
\begin{ytableau}
*(white)\\
*(white) & *(white) & *(white)  \\
*(green!70)\\
*(white) & *(white)  & *(green!70)\\
\end{ytableau}
\hspace{5mm}
\begin{ytableau}
*(white)\\
*(green!70)\\
*(white) & *(white) & *(white)  \\
*(white) & *(white)  & *(green!70) \\
\end{ytableau}
\hspace{5mm}
\begin{ytableau}
*(white) & *(green!70)\\
*(white) & *(white) & *(white)   \\ 
*(white) & *(white)  & *(green!70)\\
\end{ytableau}
\hspace{5mm}
\begin{ytableau}
*(white) \\
*(white) & *(white) & *(white)  & *(green!70)\\  
*(white) & *(white) \\
*(green!70)\\
\end{ytableau}
\hspace{5mm}
\begin{ytableau}
*(white) \\
*(white) & *(white) & *(white)  & *(green!70)\\  
*(green!70)\\
*(white) & *(white) \\
\end{ytableau}
\hspace{5mm}
\begin{ytableau}
*(white) \\
*(green!70)\\
*(white) & *(white) & *(white)   & *(green!70)\\ 
*(white) & *(white)  \\
\end{ytableau}
\hspace{5mm}
\begin{ytableau}
*(white) \\
*(white) & *(white) & *(white) & *(green!70)  \\ 
*(white) & *(white)  & *(green!70)\\
\end{ytableau}
$$

Now to compute $\qs_{(1,3,2)\cskew (2,1)}\cdot \qs_{(1,1)}$, our result tells us that for every pair of compositions in the above lists $(\varepsilon,\gamma)$ such (that the number of green boxes in $\varepsilon$)+(the number of red boxes in $\gamma$)=2 and $\gamma \lessc \varepsilon$, we have a term $\qs_{\varepsilon\cskew \gamma}$ with a sign $(-1)^{\text{number of red boxes}}$. Hence  we have the following expansion, suppressing commas and parentheses in  compositions for ease of comprehension.
\begin{align*}
\qs_{132\cskew 21}\cdot \qs_{11}=&\qs_{132\cskew 1}-\qs_{1321\cskew 11}-\qs_{1312\cskew 2}-\qs_{1132\cskew2}-\qs_{1132\cskew 11}-\qs_{133\cskew 2}-\qs_{133\cskew 11}\\&-\qs_{142\cskew 2}-\qs_{142\cskew 11}+\qs_{13121\cskew 21}+\qs_{11321\cskew 21}+\qs_{11132\cskew 21}+\qs_{1331\cskew 21}\\&+\qs_{1133\cskew 21}+\qs_{233\cskew 21}+\qs_{1421\cskew 21}+\qs_{1142\cskew 21}+\qs_{143\cskew 21}
\end{align*}
\end{example}

We now turn our attention to skew Schur functions, which we will define in the next paragraph, after we first discuss some motivation for our attention.
Skew Schur functions can be written as a sum of skew quasisymmetric Schur functions \cite[Lemma 2.23]{SSQSS}, so one might ask whether we can recover the Pieri rules for skew shapes of Assaf and McNamara by expanding a skew Schur function as a sum of skew quasisymmetric Schur functions, applying our quasisymmetric skew Pieri rules and then collecting suitable terms. However, a much simpler proof exists.


A skew Schur function $s _{\lambda/\mu}$ for partitions $\lambda, \mu$ where $\ell(\lambda)\geq\ell(\mu)$, can be defined, given an $M>\ell(\lambda)$, by \cite[Section 5.1]{BLvW}
\begin{equation}\label{eq:skewSchur}s _{\lambda/\mu}=\qs _{\lambda + 1^{M} \cskew \mu + 1^{M}}\end{equation}where  $\lambda + 1^{M} = (\lambda _1 + 1, \ldots, \lambda_{\ell(\lambda)} +1, 1^{M-\ell(\lambda)})$, and $\mu + 1^{M} = (\mu _1 + 1, \ldots, \mu_{\ell(\mu)} +1, 1^{M -\ell(\mu)})$. It follows immediately that $s_{(n)}=s_{(n)/\emptyset}=\qs _{(n)}$ and $s_{(1^n)}=s_{(1^n)/\emptyset}=\qs _{(1^n)}$ by Equation~\eqref{eq:skewSchur}. Then as a corollary of our skew Pieri rules we recover the skew Pieri rules of Assaf and McNamara as follows.

\begin{corollary}\cite[Theorem 3.2]{assaf-mcnamara}\label{cor:AM} Let $\lambda, \mu$ be partitions where $\ell(\lambda)\geq\ell(\mu)$ and $n$ be a positive integer. Then
$$s_{\lambda/\mu}\cdot s_{(n)} = \sum _{i+j = n} (-1) ^j s _{\lambda^+/\mu ^-}$$ where $\lambda^+$ is a partition such that the boxes of $\lambda ^+$ not in $\lambda$ are $i$ boxes such that no two lie in the same column, and $\mu ^-$ is a partition such that the boxes of $\mu$ not in $\mu ^-$ are $j$ boxes such that no two lie in the same row. 

Similarly,
$$s_{\lambda/\mu}\cdot s_{(1^n)} = \sum _{i+j = n} (-1) ^j s _{\lambda^+/\mu ^-}$$ where $\lambda^+$ is a partition such that the boxes of $\lambda ^+$ not in $\lambda$ are $i$ boxes such that no two lie in the same row, and $\mu ^-$ is a partition such that the boxes of $\mu$ not in $\mu ^-$ are $j$ boxes such that no two lie in the same column. 
\end{corollary}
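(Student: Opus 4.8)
The plan is to derive the corollary directly from Theorem~\ref{the:QSskewPieri} together with the single identity \eqref{eq:skewSchur}. Writing $\ell=\ell(\lambda)$, by \eqref{eq:skewSchur} and $s_{(n)}=\qs_{(n)}$ we have $s_{\lambda/\mu}\cdot s_{(n)}=\qs_{\lambda+1^{\ell}\cskew\mu+1^{\ell}}\cdot\qs_{(n)}$, and Theorem~\ref{the:QSskewPieri} expands the right-hand side as $\sum_{i+j=n}(-1)^{j}\sum\qs_{\varepsilon\cskew\gamma}$, the inner sum being over compositions $\varepsilon,\gamma$ such that $\lambda+1^{\ell}$ is obtained from $\varepsilon$ by removing an $i$-horizontal strip and $\gamma$ is obtained from $\mu+1^{\ell}$ by removing a $j$-vertical strip. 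Thus it suffices to match this signed sum of skew quasisymmetric Schur functions, term by term, with the Assaf--McNamara signed sum of skew Schur functions.

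To carry out the matching I would set up a bijection between the pairs $(\varepsilon,\gamma)$ giving a nonzero term (those with $\gamma\lessc\varepsilon$) and the pairs $(\lambda^{+},\mu^{-})$ on the right of the corollary, recording the same $i$ and $j$ (hence the same sign) and satisfying $\qs_{\varepsilon\cskew\gamma}=s_{\lambda^{+}/\mu^{-}}$. The guiding principle is that $\lambda+1^{\ell}$ and $\mu+1^{\ell}$ are exactly the partitions gotten from $\lambda$ and $\mu$ by adjoining a full first column of height $\ell$, so that Remark~\ref{rem:horizontal and vertical strip} translates composition strips on these partitions into classical strips. When $\ell(\lambda^{+})=\ell$ one takes $\varepsilon=\lambda^{+}+1^{\ell}$: since $(\lambda^{+}+1^{\ell})/(\lambda+1^{\ell})$ is just $\lambda^{+}/\lambda$ shifted one column to the right, it is a classical horizontal $i$-strip, so by Remark~\ref{rem:horizontal and vertical strip} $\lambda+1^{\ell}$ is obtained from $\varepsilon$ by removing an $i$-horizontal strip; likewise one takes $\gamma=\mu^{-}+1^{\ell}$ when $\ell(\mu^{-})=\ell(\mu)$, and then $\qs_{\varepsilon\cskew\gamma}=s_{\lambda^{+}/\mu^{-}}$ by \eqref{eq:skewSchur}. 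When instead $\lambda^{+}$ has one more row than $\lambda$, say $\lambda^{+}=(\lambda^{+}_{1},\dots,\lambda^{+}_{\ell},b)$, one takes $\varepsilon=(b)\bullet(\lambda^{+}+1^{\ell})$, prepending the extra row as a part so that $\gamma\lessc\varepsilon$ remains possible, and verifies via standard skew composition tableaux that $\qs_{(b)\bullet(\lambda^{+}+1^{\ell})\cskew\gamma}$ still equals $s_{\lambda^{+}/\mu^{-}}=\qs_{\lambda^{+}+1^{\ell+1}\cskew\mu^{-}+1^{\ell+1}}$; the symmetric adjustment covers the case where the $j$-vertical strip on $\mu$ removes an entire row. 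Conversely, one shows every admissible $(\varepsilon,\gamma)$ arises this way: the requirement $\gamma\lessc\varepsilon$ forces any part of $\varepsilon$ not belonging to a partition-plus-full-first-column configuration to be a single prepended part, which pins down the corresponding $\lambda^{+}$.

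The main obstacle is exactly this bookkeeping in the "changed row-count" cases, where $\varepsilon$ or $\gamma$ is no longer of partition-plus-column shape: one must prove that the correspondence $(\varepsilon,\gamma)\leftrightarrow(\lambda^{+},\mu^{-})$ is a well-defined bijection and that the identity $\qs_{\varepsilon\cskew\gamma}=s_{\lambda^{+}/\mu^{-}}$ persists there, which is a comparison of standard skew composition tableaux against standard Young tableaux of the two skew shapes. Once the first part of the corollary is proved, the second part follows immediately by applying the involution $\omega$, using $\omega(s_{\lambda/\mu})=s_{\lambda'/\mu'}$ and $\omega(s_{(n)})=s_{(1^{n})}$, which interchanges the roles of rows and columns (so horizontal strips become vertical strips and vice versa) throughout the statement.
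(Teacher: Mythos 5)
Your reduction via \eqref{eq:skewSchur} to Theorem~\ref{the:QSskewPieri} is the paper's starting point, but the core of your plan --- a bijection between the nonzero summands $(\varepsilon,\gamma)$ of the skew Pieri expansion and the Assaf--McNamara pairs $(\lambda^{+},\mu^{-})$ that preserves $(i,j)$ (hence the sign) and satisfies $\qs_{\varepsilon\cskew\gamma}=s_{\lambda^{+}/\mu^{-}}$ --- cannot exist, so there is a genuine gap. Take $\lambda=(1)$, $\mu=\emptyset$, $n=1$. Assaf--McNamara has only the two $j=0$ terms $s_{(2)}+s_{(1,1)}$, whereas $\qs_{(2)\cskew(1)}\cdot\qs_{(1)}$ has four nonzero summands: $+\qs_{(3)\cskew(1)}$, $+\qs_{(2,1)\cskew(1)}$, $+\qs_{(1,2)\cskew(1)}$ for $(i,j)=(1,0)$ (all three satisfy $(1)\lessc\varepsilon$), and $-\qs_{(2)\cskew\emptyset}$ for $(i,j)=(0,1)$, because $\mu+1^{1}=(1)$ has a removable box even though $\mu$ does not. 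Here $\qs_{(2,1)\cskew(1)}=\qs_{(2)\cskew\emptyset}=F_{(2)}$, and the identity holds only because these two summands cancel; no sign-preserving term-by-term matching is possible. The same examples refute your converse claim that $\gamma\lessc\varepsilon$ forces $\varepsilon$ into one of your two shapes: $(\varepsilon,\gamma)=((2,1),(1))$ is admissible yet corresponds to no $(\lambda^{+},\mu^{-})$, and for $\lambda=(2)$, $\mu=\emptyset$, $n=2$ the admissible $\varepsilon=(3,2)$ (strip removal $\down_1\down_2$, with $(1)\lessc(3,2)$) is neither of the form $\lambda^{+}+1^{\ell}$ nor of the form ``single prepended part''; such summands must cancel, not match.

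The missing idea is exactly the step the paper's proof is built around: a sign-reversing cancellation that trades a first-column box removal between the two sides, replacing $\down_1\down_{r_2}\cdots\down_{r_i}$ acting on the $\lambda$-side together with $\down_1^{p}$ acting on $\mu+1^{N}$ by $\down_{r_2}\cdots\down_{r_i}$ together with $\down_1^{p+1}$; this changes $j$ by one (so the sign flips) while producing the same skew quasisymmetric Schur function, and only the summands surviving this cancellation are identified with the Assaf--McNamara pairs. Your observations about the survivors (including the case $\varepsilon=(b)\bullet(\cdots)$ when $\lambda^{+}$ gains a row, where $\qs_{\varepsilon\cskew\gamma}$ does equal $s_{\lambda^{+}/\mu^{-}}$) are consistent with what remains after cancellation, but without the cancellation argument they do not prove the corollary. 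Finally, deducing the column version by applying $\omega$ to the already-proved row version is legitimate at the level of symmetric functions (Remark~\ref{rem:noomega} only forbids this quasisymmetrically, and the paper instead runs both cases in parallel), but note that conjugation converts the hypothesis $\ell(\lambda)>\ell(\mu)$ into $\lambda_1>\mu_1$, so that bookkeeping would also need to be addressed.
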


\begin{proof} Let $N>\ell(\lambda)+n+1$. Then consider the product $\qs _{\lambda + 1^N \cskew \mu + 1^N} \cdot  \qs _{(n)}$ (respectively, $\qs _{\lambda + 1^N \cskew \mu + 1^N} \cdot  \qs _{(1^n)}$) where $\lambda, \mu$ are partitions and $\ell(\lambda)\geq\ell(\mu)$. By the paragraph preceding the corollary, this is equivalent to what we are trying to compute.

To begin, we claim  that if 
$$\rem _1\rem _{r_2} \cdots \rem _{r_i} (\alpha ')= \lambda + 1^N$$where $1<r_2<\cdots<r_i$ (respectively, $\rem _{r _1} \cdots \rem _{r _{i-q}} \rem _1^q (\alpha ')= \lambda + 1^N$ where $q\geq 0$ and $r _1 \geq \cdots \geq r _{i-q} >1$) and 
$$\rem _{\ell _1} \cdots \rem _{\ell _{j-p}} \rem _1^p (\mu+ 1^N) = \beta '$$ where $p\geq 0$ and $\ell _1 \geq \cdots \geq \ell _{j-p} >1$ (respectively, $\rem _1\rem _{\ell_2} \cdots \rem _{\ell_j} (\mu+ 1^N) = \beta '$ where $1<\ell_2<\cdots< \ell_j$) and
$$\rem _{r_2} \cdots \rem _{r_i} (\alpha '')= \lambda + 1^N$$(respectively, $\rem _{r _1} \cdots \rem _{r _{i-q}} \rem _1^{q+1}(\alpha '')= \lambda + 1^N$) and 
$$\rem _{\ell _1} \cdots \rem _{\ell _{j-p}} \rem _1^{p+1} (\mu+ 1^N) = \beta ''$$(respectively, $\rem _{\ell_2} \cdots \rem _{\ell_j}(\mu+ 1^N) = \beta ''$) then $\beta ' \lessc \alpha ' $ if and only if $\beta '' \lessc \alpha ''$. This follows from three facts. Firstly $\ell(\lambda) \geq \ell(\mu)$, secondly $\alpha ' $ and $\alpha ''$ only differ by $\down _1$, and thirdly $\beta '$ and  $\beta ''$ only differ by $\down _1$. Moreover, ${\alpha ' \cskew \beta '}= {\alpha '' \cskew \beta ''}$. Furthermore, by our skew Pieri rules in Theorem~\ref{the:QSskewPieri}, the summands $\qs _{\alpha ' \cskew \beta '}$ and $\qs_{\alpha '' \cskew \beta ''}$ will be of opposite sign, and thus will cancel since ${\alpha ' \cskew \beta '}= {\alpha '' \cskew \beta ''}$. Consequently, any nonzero summand appearing in the product $\qs _{\lambda + 1^N \cskew \mu + 1^N} \cdot  \qs _{(n)}$ (respectively, $\qs _{\lambda + 1^N \cskew \mu + 1^N} \cdot  \qs _{(1^n)}$) is such that no box can be removed from the first column of $(\lambda + 1^N)^+$ to obtain $\lambda + 1^N$, nor from the first column of $\mu+ 1^N$ to obtain $(\mu+ 1^N)^-$.

Next observe that we can obtain $\lambda + 1^N$ by removing an $i$-horizontal (respectively, $i$-vertical) strip not containing a box in the first column from $(\lambda + 1^N)^+$  if and only if $(\lambda + 1^N)^+ = \lambda ^+ + 1^N$ where $\lambda ^+$ is a partition such that the boxes of $\lambda ^+$ not in $\lambda$  are $i$ boxes such that no two lie in the same column (respectively, row).

Similarly, we can obtain $(\mu+ 1^N)^-$ by removing a $j$-vertical (respectively, $j$-horizontal) strip not containing a box in the first column  from $\mu+ 1^N$ if and only if $(\mu+ 1^N)^- = \mu^-+ 1^N$ where $\mu ^-$ is a partition such that the boxes of $\mu$ not in $\mu ^-$ are $j$ boxes such that no two lie in the same row (respectively, column).
\end{proof}

\subsection{Noncommutative skew Pieri rules}\label{subsec:NSymskewPieri} It is also natural to ask whether skew Pieri rules exist for the dual counterparts to skew quasisymmetric Schur functions and whether our methods are applicable in order to prove them. To answer this we first need to define these dual counterparts, namely skew noncommutative Schur functions.

\begin{definition}\label{def:ncsskew} Given compositions $\alpha, \beta$, the \emph{skew noncommutative Schur function} $\ncs_{\alpha/\beta}$ is defined implicitly  via the equation
\begin{eqnarray*}
\Delta(\ncsa)&=&\displaystyle\sum_{\beta}\ncs_{\alpha/\beta}\otimes \ncsb
\end{eqnarray*}
where the sum ranges over all compositions $\beta$.
\end{definition}

With this definition and using Equation~\eqref{eq:HactingonHdual} we can deduce that
$$\qs_{\beta} \rightharpoonup \ncsa= \ncs_{\alpha/\beta}$$via a proof almost identical to that of Proposition~\ref{ob:skewingisharpooning}. We know from Definition~\ref{def:QSbasis} that  $\qs _{(n)} = F_{(n)}$ and $\qs _{(1^n)} = F_{(1^n)}$. Combined with the product for fundamental quasisymmetric functions using Definition~\ref{def:Fbasis}, Definition~\ref{def:NCbasis}, and the duality pairing, it is straightforward to deduce that for $n\geq 1$ the coproduct on $\ncs _{(n)}$ and $\ncs _{(1^n)}$ is given by 
$$\Delta(\ncs _{(n)}) = \sum _{i+j=n} \ncs _{(i)}\otimes \ncs _{(j)} \qquad\Delta(\ncs _{(1^n)}) = \sum _{i+j=n} \ncs _{(1^i)}\otimes \ncs _{(1^j)}.$$Also the action of the antipode $S$ on $\ncs _{(n)}$ and $\ncs _{(1^n)}$ is given by
$$S(\ncs _{(j)})= (-1)^j \ncs _{(1^j)} \qquad S(\ncs _{(1^j)})= (-1)^j \ncs _{(j)}.$$Using all the above in conjunction with the  right Pieri rules for noncommutative Schur functions in Theorem~\ref{the:RightPieri} yields our concluding theorem, whose proof is analogous to the proof of Theorem~\ref{the:QSskewPieri}, and hence is omitted. As always if any parts of size 0 arise during computation, then they are ignored.

\begin{theorem}\label{the:NCskewPieri}
Let $\alpha, \beta$ be compositions and $n$ be a positive integer. Then
\begin{align*}
\ncs_{\alpha/\beta}\cdot \ncs_{(n)}=\sum_{i+j=n}(-1)^j\ncs_{\alphap/\betam}
\end{align*}
where $\alphap$  is a composition such that it can be obtained by adding an $i$-right horizontal strip to $\alpha$, and $\beta^{-}$ is a composition such that $\beta$ can be obtained by adding a $j$-right vertical strip to it.

Similarly,
\begin{align*}
\ncs_{\alpha/\beta}\cdot \ncs_{(1^n)}=\sum_{i+j=n}(-1)^j\ncs_{\alphap/\betam}
\end{align*}
where $\alphap$  is a composition such that it can be obtained by adding an $i$-right vertical strip to $\alpha$, and $\beta^{-}$ is a composition such that $\beta$ can be obtained by adding a $j$-right horizontal strip to it.
\end{theorem}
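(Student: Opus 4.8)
The plan is to run the proof of Theorem~\ref{the:QSskewPieri} essentially verbatim, with the duality roles of $\Qsym$ and $\Nsym$ interchanged. I would apply Lemma~\ref{lem:magiclemma} with $\mathcal{H} = \Nsym$ and $\mathcal{H}^{*} = \Qsym$, taking $a = \qs_{\beta}$, $g = \ncsa$ and $h = \ncs_{(n)}$. Since $\qs_{\beta} \rightharpoonup \ncsa = \ncs_{\alpha/\beta}$, the noncommutative analogue of Observation~\ref{ob:skewingisharpooning} noted just after Definition~\ref{def:ncsskew}, the left-hand side of Lemma~\ref{lem:magiclemma} is precisely $\ncs_{\alpha/\beta} \cdot \ncs_{(n)}$. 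On the right-hand side I would substitute the coproduct $\Delta(\ncs_{(n)}) = \sum_{i+j=n} \ncs_{(i)} \otimes \ncs_{(j)}$ and the antipode value $S(\ncs_{(j)}) = (-1)^{j} \ncs_{(1^{j})}$, both recalled just before the theorem, reducing the right-hand side to $\sum_{i+j=n} (-1)^{j}\bigl( (\ncs_{(1^{j})} \rightharpoonup \qs_{\beta}) \rightharpoonup (\ncsa \cdot \ncs_{(i)}) \bigr)$.

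Next I would evaluate the two inner factors separately. The factor $\ncsa \cdot \ncs_{(i)}$ is given by the row right Pieri rule for noncommutative Schur functions, Theorem~\ref{the:RightPieri} (equivalently its simplified form Theorem~\ref{the:right Pieri rule for row}): it equals $\sum \ncs_{\varepsilon}$ over all compositions $\varepsilon$ obtained from $\alpha$ by adding an $i$-right horizontal strip. For the factor $\ncs_{(1^{j})} \rightharpoonup \qs_{\beta}$ I would expand $\Delta(\qs_{\beta}) = \sum_{\gamma,\delta} c^{\beta}_{\gamma,\delta}\, \qs_{\gamma} \otimes \qs_{\delta}$ in the quasisymmetric Schur basis, where $c^{\beta}_{\gamma,\delta} = \langle \qs_{\beta}, \ncs_{\gamma} \cdot \ncs_{\delta} \rangle$; then Equation~\eqref{eq:HactingonHdual}, together with the fact that $\langle \ncs_{(1^{j})}, \qs_{\delta} \rangle$ is $1$ when $\delta = (1^{j})$ and $0$ otherwise, collapses the sum to $\sum_{\gamma} c^{\beta}_{\gamma,(1^{j})}\, \qs_{\gamma}$. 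Because $c^{\beta}_{\gamma,(1^{j})} = \langle \qs_{\beta}, \ncs_{\gamma} \cdot \ncs_{(1^{j})} \rangle$, the column right Pieri rule shows this coefficient is $1$ exactly when $\beta$ is obtained from $\gamma$ by adding a $j$-right vertical strip and $0$ otherwise, so $\ncs_{(1^{j})} \rightharpoonup \qs_{\beta} = \sum_{\gamma} \qs_{\gamma}$ over precisely those $\gamma$, which are the candidates for $\betam$.

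Finally I would combine the two pieces through the outer harpoon. Using $\qs_{\gamma} \rightharpoonup \ncs_{\varepsilon} = \ncs_{\varepsilon/\gamma}$, again the noncommutative analogue of Observation~\ref{ob:skewingisharpooning}, one obtains $\ncs_{\alpha/\beta} \cdot \ncs_{(n)} = \sum_{i+j=n} (-1)^{j} \sum \ncs_{\varepsilon/\gamma}$, where $\varepsilon$ is obtained from $\alpha$ by adding an $i$-right horizontal strip and $\beta$ is obtained from $\gamma$ by adding a $j$-right vertical strip; this is the first assertion with $\alphap = \varepsilon$ and $\betam = \gamma$. The second assertion follows by the identical computation with $h = \ncs_{(1^{n})}$, using instead $\Delta(\ncs_{(1^{n})}) = \sum_{i+j=n} \ncs_{(1^{i})} \otimes \ncs_{(1^{j})}$ and $S(\ncs_{(1^{j})}) = (-1)^{j} \ncs_{(j)}$, which swaps the row and column roles of the two right Pieri rules.

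I do not expect a genuine obstacle, as the content is a bookkeeping translation of the proof of Theorem~\ref{the:QSskewPieri}. The point deserving care is the interchange of duality roles: here the base Hopf algebra in Lemma~\ref{lem:magiclemma} is $\Nsym$ rather than $\Qsym$, so the concrete harpoon actions now pair $\Qsym$ against $\Nsym$ in the reverse way to the quasisymmetric proof, and one must check that the intermediate pairings still telescope --- they do, since $\langle \ncs_{(1^{j})}, \qs_{\delta} \rangle$ and $\langle \qs_{\gamma}, \ncs_{\varepsilon} \rangle$ are Kronecker deltas by definition of the dual bases. Also, where the quasisymmetric proof leaned on $F_{(n)} = \qs_{(n)}$ and Gessel's coproduct for $F_{(n)}$, the present proof leans instead on the $\Nsym$ coproduct and antipode formulas for $\ncs_{(n)}$ and $\ncs_{(1^{n})}$ recorded just before the statement.
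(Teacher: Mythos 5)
Your proposal is correct and is precisely the argument the paper intends: the paper omits the proof as ``completely analogous'' to that of Theorem~\ref{the:QSskewPieri}, and you carry out exactly that translation, applying Lemma~\ref{lem:magiclemma} with the roles of $\Qsym$ and $\Nsym$ interchanged, the recorded coproduct and antipode formulas for $\ncs_{(n)}$ and $\ncs_{(1^n)}$, the right Pieri rules for the two inner factors, and the harpoon identity $\qs_\beta \rightharpoonup \ncsa = \ncs_{\alpha/\beta}$. Your attention to the swapped duality roles and the Kronecker pairings is the only delicate point, and you handle it correctly.
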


\section*{Acknowledgements} The authors would like to thank the referees for helpful suggestions.


\begin{thebibliography}{10}

\bibitem{aguiar-bergeron-sottile}
{\sc M.~Aguiar, N.~Bergeron and F.~Sottile}, {\em {Combinatorial Hopf algebras
  and generalized Dehn-Sommerville relations}}, Compos. Math. 142 (2006)
 1--30.

 \bibitem{assaf-mcnamara}
{\sc S.~Assaf and P.~McNamara},
{\em {A Pieri rule for skew shapes}},
J. Combin. Theory Ser. A 118 (2011) 277--290.

\bibitem{assafsearles}
{\sc S.~Assaf and D.~Searles},
{\em {Kohnert tableaux and a lifting of quasi-Schur functions}},
J. Combin. Theory Ser. A 156 (2018) 85--118.

\bibitem{BBSSZ} 
{\sc C.~Berg, N.~Bergeron, F.~Saliola, L.~Serrano and M.~Zabrocki},
{\em Indecomposable modules for the dual immaculate basis of quasi-symmetric functions},
Proc. Amer. Math. Soc.
143 (2015)
991--1000.



\bibitem{BLvW} 
{\sc C.~Bessenrodt, K.~Luoto and S.~van Willigenburg}, 
{\em Skew quasisymmetric Schur functions and noncommutative Schur functions},
Adv. Math.
226 (2011)
 4492--4532.
 
 \bibitem{SSQSS} {\sc C.~Bessenrodt, V.~Tewari and S.~van Willigenburg},
 {\em Littlewood-Richardson rules for symmetric skew quasisymmetric Schur functions}, 
 J. Combin. Theory Ser. A 137 (2016) 179--206.
 

   
   \bibitem{DKLT} 
{\sc G.~Duchamp, D.~Krob, B.~Leclerc and J-Y.~Thibon}, 
{\em Fonctions quasi-sym\'etriques, 
fonctions sym\'etriques non-commutatives, et alg\`ebres de Hecke \`a $q = 0$},
C. R. Math. Acad. Sci. Paris
 322 (1996) 
  107--112. 
 
  \bibitem{ehrenborg-1}
{\sc R.~Ehrenborg}, 
{\em {On posets and Hopf algebras}}, 
Adv. Math. 119
  (1996) 1--25.
  
  \bibitem{ferreira} 
{\sc J. Ferreira},
{\em A Littlewood-Richardson type rule for row-strict quasisymmetric Schur functions},
23rd International Conference on Formal Power Series and Algebraic
Combinatorics (FPSAC 2011)
Discrete Math. Theor. Comput. Sci. Proc. (2011) 329--337.
  





\bibitem{GKLLRT}
{\sc I.~Gelfand, D.~Krob, A.~Lascoux, B.~Leclerc, V.~Retakh and J.-Y. Thibon},
  {\em {Noncommutative symmetric functions}}, Adv. Math. 112 (1995) 218--348.
  

\bibitem{gessel}
{\sc I.~Gessel}, {\em {Multipartite P-partitions and inner products of skew
  Schur functions}},
 {Combinatorics and algebra, Proc. Conf., Boulder/CO 1983, Contemp.
  Math. 34 (1984) 289--301}.


\bibitem{HHL}
{\sc J.~Haglund, M.~Haiman and N.~Loehr}, 
{\em {A combinatorial formula for Macdonald polynomials}}, 
J. Amer. Math. Soc. 18 (2005) 735--761.
 


\bibitem{QS}
{\sc J.~Haglund, K.~Luoto, S.~Mason and S.~van Willigenburg}, 
{\em {Quasisymmetric {S}chur functions}}, 
J. Combin. Theory Ser. A 118 (2011) 463--490. 

  \bibitem{hersh-hsiao} 
{\sc P.~Hersh and S.~Hsiao},
{\em Random walks on quasisymmetric functions},
Adv. Math.
222 (2009)
 782--808.
 
 \bibitem{jingli} 
{\sc N.~Jing and Y.~ Li.}
{\em  A lift of Schur's $Q$-functions
to the peak algebra},
J. Combin. Theory Ser. A
135 (2015) 268--290.
 
 \bibitem{knuth}
 {\sc D.~Knuth}, {\em Permutations, matrices and generalized Young tableaux},
  Pacific J. Math. 34 (1970) 709--727.
 
\bibitem{konig}
{\sc S.~K\"{o}nig},
{\em {The decomposition of 0-Hecke modules associated to quasisymmetric Schur functions}},
arxiv: \url{https://arxiv.org/abs/1711.08737}.



 \bibitem{lam-lauve-sottile}
{\sc T.~Lam, A.~Lauve and F.~Sottile},
{\em {Skew Littlewood-Richardson rules from Hopf algebras}},
Int. Math. Res. Not. IMRN 6 (2011) 1205--1219.

  \bibitem{lauve-mason}
  {\sc A.~Lauve and S.~Mason},
  {\em {QSym over Sym has a stable basis}},
  J. Combin. Theory Ser. A 118 (2011) 1661--1673.

\bibitem{littlewood-richardson}
{\sc D.~Littlewood and A.~Richardson}, 
{\em Group characters and algebra},
  Philos. Trans.  R. Soc. of Lond. Ser. A Math. Phys. Eng. Sci.
   233 (1934)
  99--141.
  
\bibitem{LMvW} {\sc K.~Luoto, S.~Mykytiuk and S.~van Willigenburg},
 {\em An introduction to quasisymmetric Schur functions - Hopf algebras, quasisymmetric functions, and Young composition tableaux},  
 Springer, 2013.
 




\bibitem{malvenuto-reutenauer}
{\sc C.~Malvenuto and C.~Reutenauer}, 
{\em Duality between quasi-symmetric
  functions and the {S}olomon descent algebra}, 
  J. Algebra 177 (1995) 967--982.

\bibitem{mason}
{\sc S.~Mason}, {\em {A decomposition of Schur functions and an analogue of the
  Robinson-Schensted-Knuth algorithm}}, 
  S\'em. Lothar. Comb. 57 (2006).
  
  \bibitem{mason-remmel}
{\sc S.~Mason and J.~Remmel},
{\em {Row-strict quasisymmetric Schur functions}},
Ann. Comb. 18 (2014) 127--148.

\bibitem{monical}
{\sc C.~Monical},
{\em{Set-valued skyline fillings}},
 S\'em. Lothar. Comb. 78B (2017).

\bibitem{oguz}
{\sc E.~O\u{g}uz},
{\em{A note on Jing and Li's type $B$ quasischur functions}},
arxiv: \url{https://arxiv.org/abs/1703.09358}.

\bibitem{pieri}
{\sc M.~Pieri},
{\em {Sul problema degli spazi secanti}},
Rend. Ist. Lombardo 26 (1893) 534--546.

\bibitem{robinson}
{\sc G. de B. Robinson},
{\em On the representations of $S_n$},
 Amer. J. Math.  60 (1938) 745--760.
 
 \bibitem{schensted}
 {\sc C.~Schensted},
 {\em Longest increasing and decreasing subsequences},
 Canad. J. Math. 13 (1961) 179--191.
 
   \bibitem{searles}
{\sc D.~Searles},
{\em{Polynomial bases: positivity and Schur multiplication}},
arxiv: \url{https://arxiv.org/abs/1707.01172}. 
 


  
 \bibitem{stanley-riffle}
{\sc R.~Stanley}, {\em Generalized riffle
  shuffles and quasisymmetric functions}, Ann. Comb. 5 (2001) 479--491.
  

  
 \bibitem{tewari}
{\sc V.~Tewari},
{\em Backward jeu de taquin slides for composition tableaux and a noncommutative Pieri rule},
Electron. J. Combin. 22 (2015).

 \bibitem{MNtewari} 
 {\sc V.~Tewari},
 {\em A Murnaghan-Nakayama rule for noncommutative Schur functions}, European J. Combin. 58 (2016) 118--143. 

\bibitem{0-Hecke} {\sc V.~Tewari and S. van Willigenburg},
{\em Modules of the $0$-Hecke algebra and quasisymmetric Schur functions}, Adv. Math. 285  (2015) 1025--1065.



\end{thebibliography}

\def\cprime{$'$}

\end{document}